\newtheorem{theorem}{Theorem}[section]
\newtheorem{lemma}[theorem]{Lemma}
\newtheorem{proposition}[theorem]{Proposition}
\newtheorem{corollary}[theorem]{Corollary}
\numberwithin{equation}{section}
\def\Der{\operatorname{Der}}
\def\span{\operatorname{span}}
\newcommand{\spn}{\operatorname{span}}
\newcommand{\ot}{\otimes}
\newcommand{\ad}{\operatorname{ad}\xspace}
\newcommand{\Rad}{\operatorname{Rad}\xspace}
\renewcommand{\a}{\ensuremath{\alpha}}
\renewcommand{\l}{\ensuremath{\lambda}}
\renewcommand{\leq}{\leqslant}
\renewcommand{\geq}{\geqslant}
\newcommand{\e}{\epsilon}
\newcommand{\p}{\partial}
\def\o{\omega}
\def\sl{\mathfrak{sl}}
\def\gl{\mathfrak{gl}}
\newcommand{\fh}{\ensuremath{\mathfrak{h}}}
\newcommand{\Z}{\ensuremath{\mathbb{Z}}\xspace}
\newcommand{\N}{\ensuremath{\mathbb{N}}\xspace}
\newcommand{\W}{\ensuremath{\mathcal{W}}\xspace}
\newcommand{\C}{\ensuremath{\mathbb C}\xspace}
\def\LL{\mathcal{L}}
\def\hLL{\widehat{\mathcal{L}}}
\def\GG{\mathcal{G}}
\def\HH{\mathcal{H}}
\newcommand{\ba}{{\bf a}}
\newcommand{\bl}{{\bf l}}
\newcommand{\bm}{{\bf m}}
\newcommand{\bn}{{\bf n}}
\newcommand{\bt}{{\bf t}}
\newcommand{\br}{{\bf r}}
\newcommand{\bs}{{\bf s}}
\newcommand{\bi}{{\bf i}}
\newcommand{\bj}{{\bf j}}
\newcommand{\bk}{{\bf k}}
\newcommand{\bu}{{\bf u}}
\newcommand{\bv}{{\bf v}}
\newcommand{\bw}{{\bf w}}
\begin{document}
\title[Irreducible modules over divergence zero algebras and $q$-analogues]
{Irreducible modules over the divergence zero algebras and their $q$-analogues}
\author{Xuewen Liu, Xiangqian Guo and Zhen Wei}
\date{} \maketitle
\begin{abstract}
In this paper, we study a class of $\Z_d$-graded modules, 
which are constructed using Larsson's functor from $\sl_d$-modules $V$,    
for the Lie algebras of divergence zero vector fields on tori and quantum tori. 
We determine the irreducibility of these modules for finite-dimensional or infinite-dimensional $V$ using a unified method. In particular, these modules provide new irreducible weight modules with infinite-dimensional weight spaces for the corresponding algebras.
%
%
%
\end{abstract}

\vskip 5pt \noindent{\em Keywords:} Divergence zero algebras, Witt algebras, quantum tori, irreducible modules. 
\vskip 5pt

\section{Introduction}\label{introduction}

Representation theory for infinite-dimensional Lie algebras have
been attracting extensive attentions of many mathematicians and physicists. 
These algebras include Kac-Moody algebra, (generalized) Virasoro algebra, 
Witt algebras, Cartan type Lie algebras, Lie algebras of Block type and so on. 
For any positive integer $d$, the Witt algebra $\W_d$ is the
derivation algebra of the Laurent polynomial algebra $A_d=\C[t_1^{\pm1},\ldots,t_d^{\pm 1}]$. 
This algebra is also known as the Lie algebra of the group of diffeomorphisms of the $d$-dimensional torus. 

The representation theory of Witt algebras was
studied extensively by many authors, see \cite{B, BF, BMZ, E1, E2, GZ, LZ, T} and
references therein. 
In 1986, Shen \cite{Sh} defined a class of modules
$F_b^\a(V)$ over the Witt algebra $\W_d$ for any weight modules over
the special linear Lie algebra $\sl_d$, where $\a$ is any $d$-dimensional complex vector 
and $b$ is a complex number.
These modules as well as the
functors $F^\a_b$, known as the Larson functors now, were also studied by
Larson in \cite{L1,L2,L3}. In 1996, Eswara Rao \cite{E1} determined the
irreducibility of these modules for finite-dimensional $V$
, and recently G. Liu and K. Zhao studied the case when
$V$ is infinite-dimensional (see \cite{LZ}).

Geometrically, $\W_d$ may be interpreted as the Lie algebra of
(complex-valued) polynomial vector fields on a $d$-dimensional
torus. It has a natural subalgebra consisting of vector fields of
divergence zero, which we denote by $\hLL_d$, and another subalgebra $\LL_d$, 
which is obtained from $\hLL_d$ modulo the Cartan subalgebra of $\hLL_d$. 
They are also known as the Cartan S type Lie algebras. The modules $F^\a_b(V)$
admit natural module structures for the algebra $\hLL_d$
and $\LL_d$, and the $\hLL_d$- or $\LL_d$-module structures on $F^\a_b(V)$
do not depend on the parameter $b$, so we will denote them by $F^\a(V)$. 
Recently, Talboom \cite{T} determined the irreducibility
of the $\hLL_d$-module $F^\a(V)$ for finite-dimensional
$V$ and Billig and Talboom \cite{BT} investigated the category $\mathcal{J}$ of jet modules for $\hLL_d$.
%

Let $q=(q_{ij})_{i,j=1}^d$ be a $d\times d$ matrix over $\C$, 
where $q_{ij}=q_{ji}^{-1}$ are roots of unity. 
We have the $d$-dimensional rational quantum torus $\C_q=\C_q[t_1^{\pm1},\cdots, t_d^{\pm1}]$ (see \cite{N}),
which has been used to characterize the extended affine Lie algebras in \cite{AABGP}.
The derivation Lie algebra $\Der(\C_q)$ is a $q$-analogue of the Witt algebra $\W_d$. 
The representation theory of $\Der(\C_q)$ has been studied by many mathematicians (\cite{LT1,LZ1.5,MZ,Z}).
The Lie algebra $\Der(\C_q)$ has a natural subalgebra $\hLL_d(q)$, called the skew
derivation Lie algebra of $\C_q$. 
Removing the Cartan subalgebra from $\hLL_d(q)$, we obtain another natural subalgebra
$\LL_d(q)$. The algebras $\hLL_d(q)$ and $\LL_d(q)$ are the $q$-analogues of the algebras
$\hLL_d$ and $\LL_d$ introduced in the previous paragraph.
Similarly, for any $\sl_d$-module $V$, we can construct the Shen-Larsson module for $\hLL_d(q)$ and $\LL_d(q)$,
which we denote by $F_q^\a(V)$. 
The structures of the modules $F_q^\a(V)$ are studied by \cite{LT2} for the case $d=2$ and $V$ being finite-dimensional $\sl_2$-modules.

In the present paper, we will determine the irreducibility of
$F^\a(V)$ as modules over the divergence zero algebra
 $\hLL_d$ as well as $\LL_d$ for arbitrary $\sl_d$-module $V$ in a unified way. 
Then we generalize our results to the $q$-analogue algebras $\hLL_d(q)$ and $\LL_d(q)$.
The paper is organized as follows. 
In Section 2, we first introduce the
notation of the Witt algebra $\W_d$, its subalgebras $\hLL_d$ and
$\LL_d$, then we construct the modules
$F^{\a}(V)$ from irreducible special linear Lie algebra modules $V$. 
In Section 3, we will determine the irreducibility of $F^\a(V)$
as modules over $\hLL_d$ or $\LL_d$ for irreducible $\sl_d$-modules $V$.
In fact, we can give a description of all their submodules.
We will handle the problems for both finite-dimensional and
infinite-dimensional $V$ using a unified method.
When $V$ is finite-dimensional, our results for $\hLL_d$ 
recover the main result of Talboom \cite{T} and our results for $\LL_d$ are new.  
When $V$ is infinite-dimensional, our results provide new simple weight modules
with infinite-dimensional weight spaces for both $\hLL_d$ and $\LL_d$.
In the last section, we will generalize our results to the $q$-analogue algebras $\hLL(q)_d$ 
and $\LL(q)_d$, which are natural subalgebras of derivation Lie algebra of quantum tori, where $q$ is a root of unity. 
In particular, we get new simple weight modules with infinite-dimensional weight spaces
for these algebras.


Our results for $\hLL_d$ and $\LL_d$ generalize the similar result of Liu and Zhao for
the Witt algebra $\W_d$ in \cite{LZ} and the result of Talboom for
the algebra $\hLL_d$ for finite-dimensional $V$ (\cite{T}). 
Our results for the algebras $\hLL_d(q)$ and $\LL_d(q)$ are new except for the case 
when $d=2$ and $V$ is finite-dimensional.
The main difficulty in our question is that, unlike the algebras $\W_d$ or
$\hLL_d$, the algebras $\LL_d$ and $\LL_d(q)$ do not admit Cartan
subalgebras and hence any submodule of a weight module may not be a
weight module automatically.

\section{Preliminaries}

We denote by  $\C,\Z,\Z_+,\N$  the sets of all complex numbers, all
integers, all non-negative integer and all positive integer,
respectively. All vector spaces and algebras are over $\C$.

Fix a positive integer $d\geq 2$. For any
$\bn=(n_1,\cdots,n_d)^T\in\Z_{+}^d$ and $\ba=(a_1,\cdots,a_d)^T\in\C^d$,
we denote $\ba^\bn=a_1^{n_1}a_2^{n_2}\cdots a_d^{n_d}$, where $T$ means
taking transpose of the matrix. Let $\mathfrak{gl}_d$ be the Lie
algebra of all $d\times d$ complex matrices, and $\mathfrak{sl}_d$ be
the subalgebra of $\mathfrak{gl}_d$ consisting of all traceless
matrices.

Let $A_d=\c[t_1^{\pm1},t_2^{\pm1},\cdots,t_d^{\pm 1}]$ be the
algebra of Laurent polynomials over $\C$ and denote by $\W_d$ the
Lie algebra of all derivations of $A_d$, called the Witt algebra.
Set $\partial_i=t_i\frac{\partial}{\partial t_i}$ for
$i=1,2,\cdots,d$, and  $\bt^\bn=t_1^{n_1}t_2^{n_2}\cdots t_d^{n_d}$ for
$\bn=(n_1,n_2,\cdots,n_d)^T\in\Z^d$.

Let $(\cdot|\cdot)$ be the standard symmetric bilinear form on $\C^d$, that is,
$(\bu\ |\ \bv)={\bu}^T{\bv}\in\C$. Homogeneous elements of $\W_d$ with respect
to the power of $t$ will be denoted
$D(\bu, \br)=\bt^{\br}\sum_{i=1}^du_i\partial _i$ for $\bu=(u_1,\cdots,u_d)^T\in\C^d, \br\in\Z^d$.
Then $\W_d$ is spanned by all $D(\bu, \br)$ with $\bu\in\C^d$ and
$\br\in\Z^d$. The Lie bracket of $\W_d$ is given by
\begin{equation*}
[D(\bu, \br), D(\bv, \bs)]=D(\bw, \br+\bs), \bu, \bv\in\C^d, \br, \bs\in\Z^d,
\end{equation*}
where $\bw=(\bu\ |\ \bs)\bv-(\bv\ |\ \br)\bu$. Geometrically, $\W_d$ may be interpreted
as the Lie algebra of (complex-valued) polynomial vector fields on a
$d$-dimensional torus. Then we can deduce a subalgebra of $\W_d$,
the Lie algebra of divergence zero vector fields, denoted by $\hLL_d$. It is spanned by
$D(\bu, \br)$ which satisfy $(\bu\ |\ \br)=0$, that is,
$$\hLL_d=\spn_\C\{\p_i,\ \bt^\br (r_j\p_i - r_i\p_j)\ |\ i, j=1,\cdots,d, \br\in\Z^d\}.$$
Note that $\hLL_d$ has the Cartan subalgebra
$$\HH=\spn_{\C}\{\p_j\ |\ j=1,\cdots,d\}=\{D(\bu,0)\ |\
\bu\in\C^d\}.$$ The algebra $\hLL_d$ has a natural
subalgebra $$\LL_d=\spn_\C\{\bt^\br (r_j \p_i - r_i\p_j)\ |\ i,
j=1,\cdots,d, \br\in\Z^d\}.$$ We see that the algebra $\LL_d$ does
not admit a Cartan subalgebra.
When $d=2$, the algebra $\LL_d$ is just the centerless Virasoro-like
algebra.

Now we define some modules for these algebras. For any $\alpha\in\C^d,b\in\C$,
and $\mathfrak{gl}_d$-module $V$ on which the identity matrix acts as the
scalar $b$, let $F_b^{\a}(V)=V\otimes A_d$. 
Then $F_b^{\a}(V)$
becomes a $\W_d$-module with the following actions
\begin{equation*}
D(\bu, \br)(v\ot \bt^{\bn})=((\bu\ |\ \bn+\a)v+(\br\bu^T)v)\ot \bt^{\bn+\br},
\end{equation*}where $\bu\in\C^d, v\in V, \bn, \br\in\Z^d$.

When $V$ is a finite-dimensional $\sl_d$-module, $F_b^{\a}(V)$ has
all weight spaces finite-dimensional, and the irreducibility of
these modules are determined by \cite{E1} (see also \cite{GZ}). It
was conjectured that all irreducible $\W_d$-modules with finite
dimensional weight spaces are precisely the generalized highest
weight modules and the irreducible sub-quotient modules of $F^\a(V)$
for finite-dimensional irreducible $\sl_d$-modules $V$. This
conjecture was recently proved by \cite{BF}. When $V$ is
infinite-dimensional, it was shown that $F_b^{\a}(V)$ is always
irreducible by \cite{LZ}.

Since $\widehat{\LL}_d$ and $\LL_d$ are subalgebras of $\W_d$, by
restriction of the module action, $F_b^{\a}(V)$ can be viewed as a
module for the algebras $\widehat{\LL}_d$ and $\LL_d$. This module
are not dependent on $b$, so we denote the resulted
$\widehat{\LL}_d$-module or $\LL_d$-module by $F^\a(V)$. In
(\cite{T}), Talboom determined the irreducibility of
$F^\a(V)$ as a module over $\hLL_d$ when $V$ is finite-dimensional.
In this paper, we will determine the irreducibility of $F^\a(V)$ as
a module over the algebra $\hLL_d$ as well as the algebra $\LL_d$, when
$V$ is an arbitrary simple $\sl_d$-module. The main difficulty is that the algebra
$\LL_d$ does not admit a Cartan subalgebra. 

\section{Modules over the Divergence Zero Algebra}

Let notation as in the last section. In particular, let $V$ be an
irreducible $\sl_d$-module, $\a\in\C^d$ and $\LL_d$ is the
divergence zero algebra and we have constructed the $\LL_d$-module
$F^\a(V)$. Then we recall some known results on finite-dimensional
modules over $\sl_d$.

Fix a standard basis of $\sl_d$: $\{E_{i,j}, E_{k,k}-E_{k+1,k+1},\
|\ i,j=1,\cdots, d, k=1,\cdots, d-1, i\neq j\},$ where $E_{i,j}$ is
the matrix with $1$ as the $(i,j)$-entry and $0$ otherwise. Let
$\{\mu_1,\cdots,\mu_{d-1}\}$ be the coordinate functions on the
$d\times d$ diagonal matrices, i.e., $\mu_i(E_{j,j})=\delta_{i,j}$.
Then $\fh=\span\{E_{k,k}-E_{k+1,k+1}, k=1,2,\cdots, d-1\}$ is just
the Cartan subalgebra, $\mu_i-\mu_{i+1}$ is a base for the standard
root system of $\sl_d$ and $\omega_{k}=\mu_1+\cdots+\mu_{k},
k=1,\cdots,d-1$ are the fundamental dominant weights. Then any
dominant weight is a $\Z_+$-linear combination of these $\omega_k$
and any finite-dimensional irreducible $\sl_d$-module is a highest
weight modules $V(\l)$ with a dominant highest weight $\l$.
For convenience, we also denote $\omega_{d}=\mu_1+\cdots+\mu_{d}=0$.

The following technical lemma on $\sl_d$-modules are taken from
\cite{LZ}.

\begin{lemma}\label{sl_d-mod}
Let $V$ be an irreducible $\sl_d$-module (not necessarily weight module).
\begin{itemize}
\item [(1)]For any $i, j$ with $1\leq i\neq j\leq d$, $E_{ij}$ acts injectively or
   locally nilpotently on $V$.
\item [(2)] The module $V$ is finite-dimensional if and only if $E_{ij}$ acts locally
   nilpotently on $V$ for any $i, j$ with $1\leq i\neq j\leq d$.
\end{itemize}
\end{lemma}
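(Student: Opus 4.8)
The plan is to prove both statements by exploiting the interplay between the $\sl_d$-action and the structure of an arbitrary (possibly non-weight) irreducible module $V$. The key observation is that for $i\neq j$, the adjoint action of $E_{ij}$ on $\sl_d$ is nilpotent, indeed $(\ad E_{ij})^3=0$, so if $V$ were a module on which $E_{ij}$ acts both non-injectively and non-locally-nilpotently we would want to derive a contradiction. For part (1), I would first show that the set $W=\{v\in V:\ E_{ij}^N v=0\text{ for some }N\geq 1\}$ is an $\sl_d$-submodule of $V$. To see this, take $v\in W$ with $E_{ij}^N v=0$ and let $x\in\sl_d$ be any basis element; using $[\,E_{ij},x\,]\in\sl_d$ together with $(\ad E_{ij})^3 x = 0$, expand $E_{ij}^{M}(xv)$ by the standard commutator identity $E_{ij}^M x = \sum_{k=0}^{M}\binom{M}{k}(\ad E_{ij})^k(x)\,E_{ij}^{M-k}$; for $M$ large enough (e.g. $M\geq N+2$) every term on the right kills $v$, so $xv\in W$. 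Hence $W$ is a submodule; by irreducibility $W=0$ (so $E_{ij}$ is injective) or $W=V$ (so $E_{ij}$ is locally nilpotent). Since these two cases are mutually exclusive unless $V=0$, this gives the dichotomy.

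For part (2), the ``only if'' direction is classical: if $V$ is finite-dimensional irreducible then it is some $V(\lambda)$ with $\lambda$ dominant, and every root vector $E_{ij}$ acts nilpotently on a finite-dimensional module (this is standard $\sl_d$ representation theory — $E_{ij}$ lies in a copy of $\sl_2$ and acts nilpotently there, hence on all of $V$), so in particular locally nilpotently. The substantive direction is ``if''. Suppose $E_{ij}$ acts locally nilpotently on $V$ for every pair $i\neq j$. Pick any nonzero $v\in V$; by local nilpotency and a standard argument one can find (replacing $v$ by $E_{i j}^{k}v$ for suitable $k$, ranging over the positive root vectors in some fixed order corresponding to a Borel subalgebra) a nonzero highest weight vector $v_0$, i.e. a vector annihilated by all positive root vectors $E_{ij}$ with $i<j$. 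Here one needs to be slightly careful since $V$ is a priori not a weight module: the cleanest route is to first use local nilpotency of the $E_{ij}$ to produce a common eigenvector for $\fh$ — one shows the subspace killed by all positive root vectors is nonzero and $\fh$-stable, hence contains an $\fh$-eigenvector $v_0$ of some weight $\lambda$. Then $\U(\sl_d)v_0$ is a highest weight module; it is a quotient of the Verma module $M(\lambda)$, and by irreducibility of $V$ we get $V=\U(\sl_d)v_0$.

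It remains to show $\lambda$ is dominant integral and $V$ is finite-dimensional. Fix a simple root $\mu_k-\mu_{k+1}$ with corresponding $\sl_2$-triple $\{e_k,f_k,h_k\}$ where $e_k=E_{k,k+1}$, $f_k=E_{k+1,k}$. By hypothesis $f_k=E_{k+1,k}$ acts locally nilpotently, so $f_k^{m+1}v_0=0$ for some $m\geq 0$; restricting to the $\sl_2=\langle e_k,f_k,h_k\rangle$ acting on the cyclic vector $v_0$ (a highest weight vector for this $\sl_2$ with weight $\lambda(h_k)$) forces, by the representation theory of $\sl_2$, that $\lambda(h_k)=m\in\Z_{\geq 0}$. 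This holds for every $k$, so $\lambda$ is dominant integral and $V$ is the irreducible highest weight module $V(\lambda)$, which is finite-dimensional. I expect the main obstacle to be the bookkeeping in producing the highest weight vector $v_0$ when $V$ is not assumed to be a weight module; the commutator-expansion argument of part (1) is the technical engine that makes the relevant subspaces submodules or at least stable under the appropriate subalgebras, and one must invoke it carefully (together with local nilpotency) to guarantee a nonzero common eigenvector for the Cartan subalgebra inside the space of vectors killed by the positive root vectors.
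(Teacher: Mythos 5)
The paper itself offers no proof of this lemma --- it is quoted from \cite{LZ} --- so your proposal is judged on its own merits. Part (1) is correct and is the standard argument: the identity $E_{ij}^M x=\sum_k\binom{M}{k}\big((\ad E_{ij})^k x\big)E_{ij}^{M-k}$ together with $(\ad E_{ij})^3=0$ on $\sl_d$ shows that the set of vectors on which $E_{ij}$ acts nilpotently is a submodule, and irreducibility gives the dichotomy. The ``only if'' half of (2) is likewise fine.

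The genuine gap is in the ``if'' direction of (2), at exactly the step you flag: producing a nonzero $\fh$-eigenvector $v_0$ killed by all positive root vectors. Your first route (applying powers of the $E_{ij}$ ``in some fixed order'') fails as stated, because a later application of $E_{i'j'}$ can move the vector out of the kernel of an earlier $E_{ij}$; this part can be repaired by inducting along the lower central series of $\mathfrak{n}^+$ (start with the highest root vector $E_{1d}$, whose kernel is nonzero by local nilpotency and is $\mathfrak{n}^+$-stable because $E_{1d}$ is central in $\mathfrak{n}^+$, then continue inside that kernel), which does give $V^{\mathfrak{n}^+}\neq 0$, and $\fh$-stability of $V^{\mathfrak{n}^+}$ is immediate. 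But the next assertion is unsupported and is the real issue: an $\fh$-stable subspace of a module that is not assumed to be a weight module need not contain any $\fh$-eigenvector --- commuting operators on an infinite-dimensional space can have empty point spectrum (multiplication by $x$ on $\C[x]$), and nothing in your argument shows $V^{\mathfrak{n}^+}$ is finite-dimensional or $\fh$-locally finite. The commutator-expansion trick from part (1) gives stability statements only; it does not produce eigenvectors. Closing this gap requires an additional input which must use the local nilpotency of the \emph{negative} root vectors as well, e.g.\ the $\sl_2$-integrability lemma (a module on which both $e$ and $f$ act locally nilpotently is a direct sum of finite-dimensional modules, so each coroot $h_k$ acts semisimply with integer eigenvalues and $V$ becomes an integral weight module), or the Fernando--Kostant theorem that the set of locally finite elements of $\mg$ on $V$ is a subalgebra over which $V$ is locally finite. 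Once $V$ is known to be a weight module, your concluding computation $\l(h_k)=m\in\Z_+$ and the identification $V\cong V(\l)$ are correct.
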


For convenience, we
denote $d_{\br,i}=\bt^{\br}(r_{i+1}\partial_i-r_i\partial_{i+1})$ for
$\br=(r_1,\cdots,r_{d})^T\in\Z^d$ and $i=1,\cdots,d-1$. Let $V$ be an irreducible $\sl_d$-module and 
$F^\a(V)$ the corresponding $\LL_d$-module.
The action of $\LL_d$ on $F^\a(V)$ can be rewritten as
\begin{equation*}
d_{\br,i}v\ot\bt^{\bn}=\big((r_{i+1}\e_i-r_i\e_{i+1} | \bn+\a)+\br(r_{i+1}\e_i-r_i\e_{i+1})^T\big)v\ot\bt^{\bn+\br},\ i=1,\cdots,d-1,
\end{equation*}
where $v\in V, \bn, \br\in\Z^d$, $\e_i\in\Z^d$ is the
standard basis vector with $1$ as the $i$-th entry and $0$
otherwise. Then we have

\begin{theorem}\label{element_d} Let $V$ be an irreducible $\sl_d$-module such that $V\not\cong V(\omega_k)$ for any $k=1,\cdots,d$ and $N$ an $\LL_d$-submodule of $F^\a(V)$. Then
there exists nonzero $v\in V$ such that $v\ot \bt^\bn\in N$ for all
$\bn\in\Z^d$.
\end{theorem}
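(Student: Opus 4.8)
The plan is to start with an arbitrary nonzero element $w\in N$ and gradually reduce its ``shape'' until it becomes a pure tensor $v\otimes\bt^\bn$, after which translation by the operators $d_{\br,i}$ will be used to move it to every lattice point. Write $w=\sum_{\bn\in\supp(w)}v_\bn\otimes\bt^\bn$ with all $v_\bn\neq 0$, where $\supp(w)\subset\Z^d$ is finite. The first step is to act by a carefully chosen $d_{\br,i}$ and examine the top-degree (or an extremal-degree) component of the result: since $d_{\br,i}(v_\bn\otimes\bt^\bn)=c_i(\br,\bn)\,v_\bn\otimes\bt^{\bn+\br}$ where $c_i(\br,\bn)=(r_{i+1}\e_i-r_i\e_{i+1}\mid\bn+\a)+\br(r_{i+1}\e_i-r_i\e_{i+1})^T$ is a \emph{scalar}, the $\sl_d$-module elements $v_\bn$ are not mixed together — only their coefficients change and their degrees shift uniformly by $\br$. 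Thus if two distinct $\bn,\bn'\in\supp(w)$ satisfy $\bn-\bn'\notin\Z\br$, applying $d_{\br,i}$ repeatedly and taking suitable linear combinations (a Vandermonde-type argument in the scalars $c_i(\br,\bn)$, noting these scalars are generically distinct as $\br$ varies) will let us kill all but the terms lying in a single coset of $\Z\br$, and then a second direction reduces the support to a single point. The outcome of this first stage: $N$ contains some nonzero $v\otimes\bt^\bn$.

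The second, and more delicate, step is to propagate $v\otimes\bt^\bn\in N$ to $v\otimes\bt^{\bn+\br}\in N$ for \emph{every} $\br\in\Z^d$, i.e. to show the scalar $c_i(\br,\bn)$ can be arranged nonzero for enough choices of $i$ and $\br$. The obstruction is exactly the configuration where all these scalars vanish; this is where the hypothesis $V\not\cong V(\o_k)$ must enter. When $c_i(\br,\bn)=0$ for the ``obvious'' choices one has to instead use the full operator $d_{\br,i}$ acting on a \emph{modified} element $v'\otimes\bt^\bn$ with $v'\in V$ chosen so that the matrix part $\br(r_{i+1}\e_i-r_i\e_{i+1})^T$ — which acts on $V$ as a genuine $\sl_d$ (or $\gl_d$) operator, a combination of the $E_{i,i+1}$, $E_{i+1,i}$ and diagonal elements — does not annihilate $v'$. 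Here Lemma~\ref{sl_d-mod} is the key tool: each $E_{pq}$ ($p\neq q$) acts either injectively or locally nilpotently, and the modules $V(\o_k)$ are precisely the ones for which certain such operators behave degenerately on the relevant vectors; excluding them guarantees we can always find $v'$ on which the needed combination acts nontrivially, so that $v'\otimes\bt^{\bn+\br}\in N$ and then, pulling $v'$ back toward $v$ using the $\sl_d$-action available through the $D(\bu,0)$-type elements inside $\LL_d$ (the bracket $[d_{\br,i},d_{-\br,j}]$ produces degree-zero operators realizing $\sl_d$), we recover $v\otimes\bt^{\bn+\br}$.

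Combining: once we know $v\otimes\bt^\bn\in N$ for a fixed $\bn$ and can step to each neighbor $\bn\pm\e_k$, an induction on $\|\br\|$ gives $v\otimes\bt^{\bn+\br}\in N$ for all $\br\in\Z^d$, which is the claim (possibly after replacing the original $v$ by the final one produced, still nonzero). The main obstacle I anticipate is precisely the vanishing-scalar analysis in step two: one must enumerate the finitely many ``bad'' cases in which $(r_{i+1}\e_i-r_i\e_{i+1}\mid\bn+\a)$ is forced to be zero for all admissible $\br$ simultaneously, and show that in each such case the matrix part of $d_{\br,i}$ together with Lemma~\ref{sl_d-mod}(1) still lets us escape — and that the only genuine exceptions are the fundamental modules $V(\o_k)$ (including $V(\o_d)=V(0)$, the trivial module), which is why they are excluded in the hypothesis. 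I would expect the write-up to organize step two as a short sequence of cases keyed to which entries of $\bn+\a$ lie in $\Z$ and which of the $E_{pq}$ act locally nilpotently on $V$.
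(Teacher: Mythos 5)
There is a genuine gap, and it starts at the very first step. You assert that $d_{\br,i}(v_\bn\otimes\bt^\bn)=c_i(\br,\bn)\,v_\bn\otimes\bt^{\bn+\br}$ with $c_i(\br,\bn)$ a scalar, so that "the $\sl_d$-module elements $v_\bn$ are not mixed." This misreads the module action: the operator is $\big((r_{i+1}\e_i-r_i\e_{i+1}\mid\bn+\a)+\br(r_{i+1}\e_i-r_i\e_{i+1})^T\big)$, and the second summand is a genuine element of $\sl_d$ acting on $v_\bn$ (you acknowledge this yourself in your second step, which makes the proposal internally inconsistent). Consequently the "Vandermonde in the scalars $c_i(\br,\bn)$" reduction of the support to a single point is unsound as stated. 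In the paper, the support reduction is only achieved \emph{after} first proving the uniform translation property $\sum_{\br\in I}v_\br\otimes\bt^{\bm+\br}\in N$ for all $\bm$, and that step is exactly where the hypothesis $V\not\cong V(\o_k)$ is used: composing two operators $d_{\bm-\bn,i}d_{\bn,j}$ and extracting top-degree coefficients in $\bn$ produces \emph{products of two root vectors} $E_{k,i}E_{l,j}$ acting on $V$, and one needs some $E_{st}^2$ (or $E_{k_2+1,k_1}^2$ in the finite-dimensional case, via $a_{k_1}+a_{k_2}\geq 2$) to act nontrivially — precisely what fails for the fundamental modules $\bigwedge^k\C^d$, where all squares of root vectors vanish. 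Your plan places the exclusion of $V(\o_k)$ only in the later propagation stage, with no mechanism that would detect this quadratic obstruction.

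The second gap is the claim that "the bracket $[d_{\br,i},d_{-\br,j}]$ produces degree-zero operators realizing $\sl_d$" inside $\LL_d$, which you rely on to pull $v'$ back to $v$. This is false: for divergence-zero elements one has $(\bu_1\mid\br)=(\bu_2\mid\br)=0$, so $[D(\bu_1,\br),D(\bu_2,-\br)]=D\big((\bu_1\mid{-\br})\bu_2-(\bu_2\mid\br)\bu_1,\,0\big)=0$; the degree-zero component of $\LL_d$ is zero, and no $\sl_d$-action at degree zero is available. This is precisely the difficulty the paper emphasizes ($\LL_d$ has no Cartan subalgebra, so submodules need not be graded), and the paper circumvents it differently: after translation-invariance is established, it applies pairs $d_{\bm,i}$, $d_{\bm-\bk,i}$ to two shifted copies of the same element and subtracts to shrink the index set $I$, eventually landing on a single $v\otimes\bt^\bn$, and then reruns the first step to spread it over all of $\Z^d$. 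Your neighbor-by-neighbor induction on $\|\br\|$, together with the vague case analysis of "bad" vanishing scalars, does not substitute for this; as written the proposal would not close.
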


\begin{proof}
Let $N$ be a nonzero $\LL_d$-submodule of $F^\a(V)$. 
Choose a nonzero vector $\sum_{\br\in I}v_{\br}\otimes \bt^{\br}\in
N$, where $I\subseteq \Z^d$ is a finite subset. For any
$\bm=(m_1,\cdots,m_d)^T, \bn=(n_1,\cdots,n_d)^T\in\Z^d$ and $i,j\in
\{1,2,\cdots,d-1\}$, we have
\begin{equation}\label{d:m-n,n}\begin{split}
  &  d_{\bm-\bn,i}d_{\bn,j}(\sum_{\br\in I}v_{\br}\otimes \bt^{\br})\\
= & \big((\bu_1 | \a+\bn+\br)+(\bm-\bn)\bu_1^T\big)\big((\bu_2\ |\ \a+\br)+\bn \bu_2^T\big)({\sum_{\br\in I}v_{\br}\otimes \bt^{\bm+\br}})\\
= & \Big((\bu_1 | \a+\bn+\br)+\sum_k(m_k-n_k)(m_{i+1}-n_{i+1})E_{k, i} -\sum_k(m_k-n_k)(m_{i}-n_{i})E_{k, i+1}\Big)\\
  & \hskip5pt \cdot\Big((\bu_2 | \a+\br)+\sum_ln_ln_{j+1}E_{l,j}-\sum_ln_ln_{j}E_{l,j+1}\Big)\sum_{\br\in I}v_{\br}\otimes \bt^{\bm+\br}\in N.\\
\end{split}\end{equation}
where $\bu_1=(m_{i+1}-n_{i+1})\e_{i}-(m_{i}-n_{i})\e_{i+1},
\bu_2=n_{j+1}\e_{j}-n_{j}\e_{j+1}\in\C^d$. Consider the right-hand side of \eqref{d:m-n,n} as a
polynomial in $\bn$ with coefficients in $N$, and using the
Vandermonde's determinant, we can deduce that,
\begin{equation}\label{polynomial}\begin{split}
  & \sum_{\br\in I}\sum_{k,l=1}^d\Big(n_kn_{i+1}E_{k, i} -n_kn_{i}E_{k, i+1}\Big)\Big(n_ln_{j+1}E_{l,j}-n_ln_{j}E_{l,j+1}\Big)v_{\br}\otimes \bt^{\bm+\br}\\
= & \sum_{\br\in I}\sum_{k,l=1}^d\Big(n_kn_{i+1}n_ln_{j+1}E_{k,i}E_{l,j}+n_kn_{i}n_ln_{j}E_{k,i+1}E_{l,j+1}\\
  & \hskip10pt -n_kn_{i}n_ln_{j+1}E_{k,i+1}E_{l,j}-n_kn_{i+1}n_ln_{j}E_{k,i}E_{l,j+1}\Big)v_{\br}\otimes \bt^{\bm+\br}\in N,\ \forall\ \bm\in\Z^d.\\
\end{split}\end{equation}

\textbf{Step 1.} If $V$ is infinite-dimensional, then by Lemma
\ref{sl_d-mod}, the action of $E_{st}$ on $V$ is injective  for some
$1\leq s\neq t\leq d$. Without loss of generality, we may assume
that $s>t$. Take $i=j=t$ in \eqref{polynomial}, then considering the
coefficient of $n_s^2n_{t+1}^2$ gives $\sum_{\br\in
I}E_{st}E_{st}v_{\br}\otimes \bt^{\bm+\br}\in N$ for all $\bm\in\Z^d$,
thanks to the Vandermonde's determinant again. By replacing
$v_{\br}$ with $E_{st}E_{st}v_{\br}$ we may assume that
\begin{equation}\label{d step-1}
\sum_{\br\in I}v_{\br}\otimes \bt^{\bm+\br}\in N,\ \forall\
\bm\in\Z^d.
\end{equation}

Now we suppose that $V$ is finite-dimensional and hence a highest
weigh module $V(\l)$, with $\l$ being a dominant weight. Take any
$\br_0\in I$ such that $v_{\br_0}=v_1+\cdots +v_l\neq 0$ with $v_i$
being weight vectors of different weights and the weight of $v_1$ is
maximal. If the weight of $v_1$ is not $\l$, then there exists
$1\leq j\leq d-1$ such that $E_{j,j+1}v_1\neq 0$ has larger weight.
Now we consider that
\begin{equation}\label{d:n}\begin{split}
    & d_{\bn,j}\sum_{\br\in I}v_{\br}\otimes \bt^{\br}\\
  = & \big((n_{j+1}\e_j-n_j\epsilon_{j+1} | \a+\br)+\bn (n_{j+1}\e_j-n_j\epsilon_{j+1})^T\big) {\sum_{\br\in I}v_{\br}\otimes \bt^{\bn+\br}}\\
  = & \sum_{\br\in I}\Big((n_{j+1}\e_j-n_j\epsilon_{j+1} | \a+\br)+ \sum_{k=1}^d(n_kn_{j+1}E_{k,j}-n_kn_jE_{k,j+1})\Big)v_{\br}\otimes \bt^{\bn+\br}\in N.
\end{split}\end{equation}
Take suitable $\bn\in\Z^d$ (for example, making $n_j\gg n_i$ for
$i\neq j$), we see that $-n_j^2E_{j,j+1}v_1$ is a nonzero weight
component of $d_{\bn,j}v_{\br_0}$ with maximal weight larger than
that of $v_1$. Replacing $\sum_{\br\in T}v_\br\ot \bt^{\br}$ with $d_{\bn,j}\sum_{\br\in T}v_\br\ot \bt^{\br}$, and repeating the above process several times, we may
assume that the weight of $v_1$ is $\l$.

Suppose that $\l=\sum_{k=1}^{d-1}a_k\omega_k$ for some $a_k\in\Z_+$.
Since $\l\neq w_k$ for any $k=1,\cdots,d$, there exist some $1\leq
k_1\leq k_2\leq d-1$ such that $a_{k_1}+a_{k_2}\geq2$. Let
$\mathfrak{s}$ be the $3$-dimensional simple Lie algebra spanned by
$E_{k_1,k_2+1}, E_{k_2+1,k_1}$ and $E_{k_1,k_1}-E_{k_2+1,k_2+1}$. Note that
$\l(E_{k_1,k_1}-E_{k_2+1,k_2+1})=a_{k_1}+a_{k_1+1}+\cdots+a_{k_2}$, hence the
$\mathfrak{s}$-module generated by $v_1$ has highest weight
$a_{k_1}+a_{k_1+1}+\cdots+a_{k_2}\geq 2$. In particular, $E_{k_2+1,k_1}^2v_1\neq 0$ and
$E_{k_2+1,k_1}^2v_{\br_0}\neq 0$.

Similar as for the infinite-dimensional case, taking $i=j=k_1$ in
\eqref{polynomial} and considering the coefficient of
$n_{k_2+1}^2n_{k_1+1}^2$, one can deduce $0\neq \sum_{\br\in
I}E_{k_2+1,k_1}^2v_{\br}\otimes \bt^{\bm+\br}\in N$ for all
$\bm\in\Z^d$. By replacing $v_{\br}$ with $E_{k_2+1,k_1}^2v_{\br}$ we
may assume that \eqref{d step-1} holds.

\textbf{Step 2.} Now we assume that \eqref{d step-1} holds. In
particular, we have
$$\sum_{\br\in I}v_{\br}\otimes \bt^{\bn+\br}\in N\ \ \text{and}\ \ \sum_{\br\in I}v_{\br}\otimes \bt^{\bn+\bk+\br}\in N.$$
Applying $d_{\bm,i}$ and $d_{\bm-\bk,i}$ to the above elements
respectively, we get
\begin{equation*}\begin{split}
 d_{\bm,i}\sum_{\br\in I}v_{\br}\otimes \bt^{\bn+\br} =\sum_{\br\in I} \big((\bu_3 | \a+\bn+\br)+\bm\bu_3^T\big)v_{\br}\otimes \bt^{\bm+\bn+\br}\in N\\
\end{split}\end{equation*}
and
\begin{equation*}\begin{split}
 d_{\bm-\bk,i}\sum_{\br\in I}v_{\br}\otimes \bt^{\bn+\bk+\br}=\sum_{\br\in I} \big((\bu_4 | \a+\bn+\bk+\br)
 +(\bm-\bk)\bu_4^{T}\big)v_{\br}\otimes \bt^{\bm+\bn+\br}\in
 N,
\end{split}\end{equation*}
where $\bu_3=m_{i+1}\varepsilon_{i}-m_{i}\varepsilon_{i+1},
\bu_4=(m_{i+1}-k_{i+1})\e_{i}-(m_{i}-k_{i})\e_{i+1}$.
Take $\bm=2\bk$, and subtracting a multiple of $ \sum_{\br\in
I}v_{\br}\otimes \bt^{\bm+\bn+\br}$, we get
\begin{equation*}\begin{split}
\sum_{\br\in I}\big((\bu_3 | \br)+\bm\bu_3^T\big)v_{\br}\otimes \bt^{\bm+\bn+\br}\in N\\
\end{split}\end{equation*}
and
\begin{equation*}\begin{split}
  \frac{1}{4}\sum_{\br\in I}\big(2(\bu_3 | \br)+\bm\bu_3^{T}\big)v_{\br}\otimes \bt^{\bm+\bn+\br}\in N.
\end{split}\end{equation*}
These two formulas imply that
$$\sum_{\br\in I}(k_{i+1}\varepsilon_{i}-k_{i}\varepsilon_{i+1} | \br)v_{\br}\otimes t^{2\bk+\bn+\br}\in N,\ \forall\ \bk\neq0, i=1,\cdots, d-1.$$

Combining this formulas with \eqref{d step-1}, we can cancel some
term involving $v_{\br}, \br\in I$ to make $I$ smaller. Finally, we
deduce that $v_{\br}\otimes \bt^{\bn}\in N$ for some $\bn\in\Z^d$.
Repeating Step 1 to this element, we can obtain some $v\in V $ such
that $ v\otimes \bt^{\bn}\in N$ for all $\bn\in\Z^d$.
\end{proof}

\begin{proposition}\label{d:irre} Let $V$ be an irreducible
$\mathfrak{sl}_d$ and $W$ is a $\LL_d$-submodule of $F^\a(V)$. If
there exists $v\in V$ such that $v\ot\bt^\bn \in W$ for all
$\bn\in\Z$, then $W=F^\a(V)$.
\end{proposition}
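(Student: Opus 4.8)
The plan is to show that the hypothesis — that $v\otimes\bt^{\bn}\in W$ for a fixed nonzero $v$ and all $\bn\in\Z^d$ — forces $W$ to contain $v'\otimes\bt^{\bn}$ for every $v'$ in the $\sl_d$-submodule generated by $v$, which is all of $V$ by irreducibility, and then for all $\bn$; this gives $W=F^\a(V)$. First I would apply the operators $d_{\br,i}$ to the elements $v\otimes\bt^{\bn}$ already known to lie in $W$. From the rewritten action
\[
d_{\br,i}(v\otimes\bt^{\bn})=\Big((r_{i+1}\e_i-r_i\e_{i+1}\mid\bn+\a)v+\sum_{k}\big(r_kr_{i+1}E_{k,i}-r_kr_i E_{k,i+1}\big)v\Big)\otimes\bt^{\bn+\br},
\]
one sees that, for fixed $\br$, varying $\bn$ over a coset and taking suitable linear combinations (the scalar coefficient $(r_{i+1}\e_i-r_i\e_{i+1}\mid\bn+\a)$ is an affine function of $\bn$, so differences of two choices of $\bn$ isolate terms), $W$ contains $\big(\sum_k(r_kr_{i+1}E_{k,i}-r_kr_i E_{k,i+1})v\big)\otimes\bt^{\bm}$ for all $\bm$ in that coset, and then — using $v\otimes\bt^{\bm}\in W$ again to absorb the scalar part — for all $\bm\in\Z^d$. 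Since we are free to choose $\br$, I would next extract individual matrix entries: treating the expression as a polynomial in the components $r_1,\dots,r_d$ and invoking Vandermonde (exactly as in the proof of Theorem \ref{element_d}), the coefficient of a monomial such as $r_s r_{i+1}$ (with $s\neq i$) yields $E_{s,i}v\otimes\bt^{\bm}\in W$ for all $\bm$, and similarly $E_{s,i+1}v\otimes\bt^{\bm}\in W$.

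This shows that the set $V_W:=\{v'\in V\mid v'\otimes\bt^{\bn}\in W\ \forall\ \bn\in\Z^d\}$ is stable under every off-diagonal generator $E_{s,i}$ and $E_{s,i+1}$ with $i\le d-1$, hence under all $E_{p,q}$, $p\neq q$; since these together with the Cartan generate $\sl_d$ and $V_W$ is evidently stable under $\fh$ as well (diagonal elements are sums of commutators $[E_{p,q},E_{q,p}]$, or alternatively one checks $V_W$ is a linear subspace and applies the $E_{p,q}$ twice), $V_W$ is a nonzero $\sl_d$-submodule of $V$. By irreducibility of $V$, $V_W=V$, so $v'\otimes\bt^{\bn}\in W$ for all $v'\in V$ and all $\bn\in\Z^d$, i.e. $W=F^\a(V)$.

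The one point requiring care — and the main obstacle — is the very first extraction step: showing that from $v\otimes\bt^{\bn}\in W$ for all $\bn$ one can genuinely separate the ``$E$-part'' of $d_{\br,i}(v\otimes\bt^{\bn})$ from the scalar part and then remove the remaining coefficients, since $\LL_d$ (unlike $\hLL_d$) has no Cartan subalgebra and $W$ is not assumed to be a weight module. The resolution is that the argument above never uses weight-space decompositions of $W$: it only uses that $W$ is closed under the explicit operators $d_{\br,i}$ and under $\C$-linear combinations, together with polynomial-interpolation (Vandermonde) in the discrete parameters $\bn$ and $\br$, both of which range over full lattices. One should double-check that enough distinct values of the relevant parameters are available to run Vandermonde in each case (they are, because $d\ge 2$ and the lattices are infinite), and that when $i$ runs only up to $d-1$ the entries $E_{s,d}$ are still reached — they are, taking $i=d-1$ and reading off $E_{s,i+1}=E_{s,d}$. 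Everything else is the same bookkeeping already carried out in Theorem \ref{element_d}.
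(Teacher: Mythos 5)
Your proposal is correct and is exactly the ``standard'' argument the paper invokes and omits: you show that $V_W=\{v'\in V \mid v'\ot\bt^{\bn}\in W\ \forall\,\bn\in\Z^d\}$ is a nonzero $\sl_d$-submodule of $V$ by applying $d_{\br,i}$ to $v\ot\bt^{\bm-\br}$, absorbing the scalar term using $v\ot\bt^{\bm}\in W$, and extracting the operators $E_{p,q}$ ($p\neq q$) via polynomial interpolation in $\br$, whence irreducibility of $V$ gives $W=F^\a(V)$. The details check out (in particular all $E_{p,q}$ are indeed reached, with $q=d$ handled by $i=d-1$), so this matches the paper's intended proof.
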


\begin{proof} The proof is standard by showing that the subspace
$\{v\ |\ v\ot \bt^\bn \in W,\ \forall\ \bn\in\Z\}$ is a $\sl_d$-submodule
of $V$. We omit the details.
\end{proof}

\begin{theorem}\label{d:infinite}
Let $V$ be an irreducible $\sl_d$-module which is not isomorphic to any $V(\o_k)$ for $k=1,2,\cdots, d$, then $F^\a(V)$ is an irreducible $\LL_d$-module.
\end{theorem}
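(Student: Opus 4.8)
The plan is to combine the two preceding results, Theorem~\ref{element_d} and Proposition~\ref{d:irre}, which together do essentially all the work. Let $N$ be an arbitrary nonzero $\LL_d$-submodule of $F^\a(V)$; the goal is to show $N=F^\a(V)$. Since $V$ is irreducible and, by hypothesis, $V\not\cong V(\o_k)$ for any $k=1,\dots,d$, Theorem~\ref{element_d} applies verbatim and produces a nonzero vector $v\in V$ such that $v\ot\bt^\bn\in N$ for every $\bn\in\Z^d$. (Note that this hypothesis on $V$ is automatically satisfied when $V$ is infinite-dimensional, since by Lemma~\ref{sl_d-mod} some $E_{ij}$ then acts injectively, so $V$ cannot be one of the finite-dimensional modules $V(\o_k)$; thus the theorem indeed covers the infinite-dimensional case promised in the introduction.)

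With such a $v$ in hand, the hypothesis of Proposition~\ref{d:irre} is met with $W=N$, and that proposition yields $N=F^\a(V)$. Since every nonzero $\LL_d$-submodule is therefore the whole module, $F^\a(V)$ is an irreducible $\LL_d$-module, as claimed.

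I expect no real obstacle at this last step: all the difficulty is already packaged upstream, in Theorem~\ref{element_d} (locating, inside an arbitrary submodule, a single vector $v$ whose ``$t$-constant orbit'' $\{v\ot\bt^\bn:\bn\in\Z^d\}$ lies in $N$, via the Vandermonde/polynomial-coefficient extraction) and in Proposition~\ref{d:irre} (the observation that $\{v\in V: v\ot\bt^\bn\in W\ \forall\,\bn\}$ is an $\sl_d$-submodule of $V$, hence equal to $V$ by irreducibility, after which the $\LL_d$-action sweeps this out over all of $F^\a(V)$). The only subtlety worth flagging is the one highlighted in the introduction — that $\LL_d$ has no Cartan subalgebra, so a submodule of a weight module need not be a weight module — and it causes no trouble here precisely because Theorem~\ref{element_d} delivers the vectors $v\ot\bt^\bn$ directly from an arbitrary submodule, with no weight-space decomposition required to finish.
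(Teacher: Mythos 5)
Your proof is correct and is exactly the argument the paper intends: the theorem is stated without a separate proof precisely because it follows immediately by combining Theorem~\ref{element_d} (which puts some $v\ot\bt^\bn$, for all $\bn\in\Z^d$, inside any nonzero submodule) with Proposition~\ref{d:irre}. Your side remarks (that infinite-dimensional $V$ automatically satisfies the hypothesis, and that no weight-space decomposition of the submodule is needed) are accurate and consistent with the paper.
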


Since $\LL_d$ is a subalgebra of $\hLL$, 
we can easily deduce the following corollary.

\begin{corollary}\label{hatLL}
Let $V$ be an irreducible $\sl_d$-module which is not isomorphic to
any $V(\o_k)$ for $k=1,2,\cdots, d$, then $F^\a(V)$ is an
irreducible $\hLL_d$-module. 
\end{corollary}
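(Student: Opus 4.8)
The plan is to deduce Corollary~\ref{hatLL} directly from Theorem~\ref{d:infinite} by exploiting the inclusion $\LL_d\subseteq\hLL_d$ together with the observation that both algebras act on $F^\a(V)$ through the restriction of the same $\W_d$-action. Concretely, suppose $W$ is a nonzero $\hLL_d$-submodule of $F^\a(V)$. Since $\LL_d$ is a subalgebra of $\hLL_d$, the subspace $W$ is in particular closed under the action of $\LL_d$, so $W$ is also a nonzero $\LL_d$-submodule of $F^\a(V)$. By hypothesis $V\not\cong V(\o_k)$ for any $k=1,\dots,d$, so Theorem~\ref{d:infinite} applies and forces $W=F^\a(V)$. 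Hence $F^\a(V)$ has no proper nonzero $\hLL_d$-submodule, i.e. it is an irreducible $\hLL_d$-module.

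The only point that warrants a line of justification is that the $\hLL_d$-module structure on $F^\a(V)$ is genuinely the restriction of the $\W_d$-module structure $F^\a_b(V)=V\otimes A_d$, which does not depend on $b$: this is exactly what is recorded in Section~2, where it is noted that the $\hLL_d$- and $\LL_d$-actions obtained by restriction from $\W_d$ are independent of $b$ and are denoted $F^\a(V)$. Thus the $\LL_d$-action used in Theorem~\ref{d:infinite} is literally the restriction along $\LL_d\hookrightarrow\hLL_d$ of the $\hLL_d$-action in the corollary, so submodules for the larger algebra are automatically submodules for the smaller one.

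I do not anticipate any real obstacle here: the statement is a formal consequence of the general principle that if $B\subseteq A$ are Lie algebras and $M$ is an $A$-module whose restriction to $B$ is irreducible, then $M$ is irreducible as an $A$-module. The one caveat worth flagging (and the reason one cannot run this implication in the opposite direction) is that irreducibility does \emph{not} pass from the larger algebra to the smaller one; it is precisely the subalgebra $\LL_d$ that carries the harder content, and Theorem~\ref{d:infinite} is the substantive result, with Corollary~\ref{hatLL} being the easy corollary. So the proof is essentially the one sentence: any $\hLL_d$-submodule is an $\LL_d$-submodule, hence is $0$ or all of $F^\a(V)$ by Theorem~\ref{d:infinite}.
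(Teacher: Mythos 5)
Your proposal is correct and is exactly the argument the paper intends: since $\LL_d\subseteq\hLL_d$ and the $\hLL_d$-action on $F^\a(V)$ restricts to the $\LL_d$-action of Theorem~\ref{d:infinite}, any nonzero $\hLL_d$-submodule is a nonzero $\LL_d$-submodule and hence equals $F^\a(V)$. The paper states the corollary with precisely this one-line justification, so your proof matches its approach.
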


Now we determine the $\LL$-submodule structure of $F^\a(V(\o_k))$
for all $k=1,\cdots,d$. First the irreducibility of $F^\a(V(\o_d))$
is clear: since $V(\o_d)=V(0)$ is the $1$-dimensional
$\sl_d$-module, we may identify $F^\a(V)=\C[t_1^{\pm1},
\cdots,t_d^{\pm1}]$ with module action $D(\bu,\br)v(\bn)=(\bu\ |\
\bn+\a)v(\bn+\br)$, which is irreducible when $\a\notin\Z^d$ and is
the direct sum of two irreducible submodules $\C \bt^{-\a}$ and
$\sum_{\a+\bn\neq 0}\C \bt^\bn$ if $\a\in\Z^d$.

Let $V_1=V(\o_1)$, the highest weight $\sl_d$-module with highest
weight $\o_1$. Then we can take $V_1=\C^d$ as a vector space such
that $\sl_d$ acts on $V$ via the matrix multiplication. Let
$\bigwedge^{k}(V_1)$ be the submodule of $k$-th tensor product module of
$V_1$ consisting of skew symmetric elements. Then $\bigwedge^{k}(V_1)$ is
just the highest weight $\sl_d$-module with highest weight $\o_k$
for any $k=1,2\cdots,d$. 

Now let $V=\bigwedge^k\C^d$ for some
$k=1,2\cdots,d$ and we consider the $\LL_d$-submodule structure of
$F^\a(V)$.
It is easy to see that for any $\a\in\C^d$, the module $F^{\a}(V)$ has a natural submodule
$$W=\{(v_1\wedge\cdots\wedge v_{k-1}\wedge(\a+\bn))\otimes \bt^{\bn}, v_1,\cdots,v_{k-1}\in \C^d\}$$ 
and for $\a\in \Z^d$， the module $F^{\a}(V)$ has additional submodules of the form
$$W'=W\oplus (V'\otimes t^{-\a}),$$
where $V'$ is an arbitrary subspace of $V$.

\begin{lemma}\label{orthg}
For any $\bm,\bn\in\Z^d, \bu\in\C^d$ with $\bn\neq0$ and
$(\bu|\bn)=0$, there exists $\bu'\in\C^d$ such that $(\bu'|\bm)=0$
and $(\bu'-x\bu|\bm-x\bn)=0$ for any $x\in\C$.
\end{lemma}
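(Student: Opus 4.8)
\textbf{Proof plan for Lemma \ref{orthg}.}

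The statement asks: given $\bn\neq 0$ and $\bu$ with $(\bu|\bn)=0$, find $\bu'$ with $(\bu'|\bm)=0$ such that the one-parameter family $\bu'-x\bu$ is orthogonal to $\bm-x\bn$ for all $x$. Expanding $(\bu'-x\bu|\bm-x\bn)=(\bu'|\bm)-x(\bu'|\bn)-x(\bu|\bm)+x^2(\bu|\bn)$, and using $(\bu|\bn)=0$, this polynomial in $x$ vanishes identically if and only if $(\bu'|\bm)=0$ and $(\bu'|\bn)+(\bu|\bm)=0$. So the whole lemma reduces to the purely linear-algebra question: \emph{does there exist $\bu'\in\C^d$ solving the two linear equations $(\bu'|\bm)=0$ and $(\bu'|\bn)=-(\bu|\bm)$ simultaneously?}

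The plan is to treat two cases according to whether $\bm$ and $\bn$ are linearly dependent. If $\bm$ and $\bn$ are linearly independent, then since $d\geq 2$ the linear functionals $\bu'\mapsto(\bu'|\bm)$ and $\bu'\mapsto(\bu'|\bn)$ are themselves linearly independent, hence the map $\C^d\to\C^2$, $\bu'\mapsto((\bu'|\bm),(\bu'|\bn))$, is surjective; in particular the value $(0,-(\bu|\bm))$ is attained, giving the desired $\bu'$. If $\bm$ and $\bn$ are linearly dependent, write $\bm=c\,\bn$ for some $c\in\C$ (possibly $c=0$); then $(\bu|\bm)=c(\bu|\bn)=0$, so the second equation becomes $(\bu'|\bn)=0$, and the two equations coincide (both say $(\bu'|\bn)=0$, since $(\bu'|\bm)=c(\bu'|\bn)$). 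As $\bn\neq0$, the hyperplane $\{\bu':(\bu'|\bn)=0\}$ is nonempty, so such $\bu'$ exists. In either case we are done.

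There is essentially no hard step here; the only thing to be careful about is the bookkeeping that reduces the quadratic-in-$x$ identity to the two scalar conditions, and the observation that the hypothesis $(\bu|\bn)=0$ is exactly what kills the $x^2$-term so that the reduction to a linear system is possible. The nondegeneracy of the standard bilinear form $(\cdot|\cdot)$ on $\C^d$ is what guarantees, in the independent case, that two linearly independent vectors $\bm,\bn$ give two linearly independent functionals; this is where $d\geq 2$ is used implicitly (for $d=1$ any two nonzero vectors are dependent, so only the second case can occur, and it still works). I would present the computation of the coefficients of $1,x,x^2$ in the displayed expansion, note the vanishing of the $x^2$ coefficient, and then invoke surjectivity of $\bu'\mapsto((\bu'|\bm),(\bu'|\bn))$ (resp. nonemptiness of a hyperplane) to conclude.
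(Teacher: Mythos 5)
Your proposal is correct. The reduction is exactly right: expanding $(\bu'-x\bu|\bm-x\bn)=(\bu'|\bm)-x\big((\bu'|\bn)+(\bu|\bm)\big)+x^2(\bu|\bn)$ and using $(\bu|\bn)=0$, the lemma becomes the solvability of the linear system $(\bu'|\bm)=0$, $(\bu'|\bn)=-(\bu|\bm)$, and your case split (independent $\bm,\bn$: two independent functionals on $\C^d$, so the map $\bu'\mapsto((\bu'|\bm),(\bu'|\bn))$ is onto $\C^2$; dependent $\bm=c\bn$: then $(\bu|\bm)=c(\bu|\bn)=0$ and any $\bu'$ in the hyperplane $\bn^{\perp}$, even $\bu'=0$, works) settles it. The paper's route is different in flavor: it is purely constructive. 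Assuming $n_1\neq0$, it writes $\bu=\sum_{i=2}^d a_i(n_i\e_1-n_1\e_i)$ (a spanning set of $\bn^{\perp}$) and simply sets $\bu'=\sum_{i=2}^d a_i(m_i\e_1-m_1\e_i)$; the antisymmetry of the coefficients then gives $(\bu'|\bm)=0$ and $(\bu'|\bn)+(\bu|\bm)=0$ in one line, with no case distinction. Your argument is coordinate-free and makes the underlying linear-algebra content (nondegeneracy of the standard form) explicit, at the cost of being nonconstructive; the paper's explicit formula is what the authors actually reuse later (the same $n_i\e_1-n_1\e_i$ construction reappears verbatim in Claim 3 of the proof of Theorem \ref{d:omega_k}, where concrete vectors $w,w',w_j$ with prescribed orthogonality are needed), so if you intend your lemma proof to feed into that argument you would want to note that a concrete solution can be written down in the same way.
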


\begin{proof}
Suppose that $\bn=(n_1,\cdots,n_d)$ and $\bm=(m_1,\cdots,m_d)$.
Without loss of generality, we may assume $n_1\neq0$. Then we can
write $\bu=\sum_{i=2}^da_i(n_i\e_1-n_{1}\e_i)$ for some $a_i\in\C$.
It is easy to check that $\bu'=\sum_{i=2}^da_i(m_i\e_1-m_1\e_i)$
satisfies the requirement of the lemma.
\end{proof}

\begin{theorem}\label{d:omega_k}
Let $V=V(\omega_k)$ for $k=1,2,\cdots, d-1$. Then
\begin{itemize}
\item[(1)] If $\a\notin\Z^d$, then $F^\a(V)$ has a unique nonzero proper $\LL_d$-submodule $W$;
\item[(2)] If $\a\in\Z^d$, then any nonzero submodule of $F^\a(V)$ is of the form
$W'=W\oplus (V'\otimes t^{-\a})$, where $V'$ is an arbitrary subspace of $V$.
\end{itemize}
\end{theorem}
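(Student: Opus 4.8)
The plan is to analyze the module $F^\a(V)$ for $V=\bigwedge^k\C^d$ directly, using the fact that the submodule $W$ described above is the ``obviously present'' one, and showing it is essentially the only source of proper submodules. First I would set up the key structural fact: the quotient $F^\a(V)/W$ is small and easy to understand. Indeed, for each fixed $\bn$ with $\a+\bn\ne 0$, the fibre $V\otimes \bt^\bn$ modulo $W$ is $1$-dimensional (since $\{v_1\wedge\cdots\wedge v_{k-1}\wedge(\a+\bn) : v_i\in\C^d\}$ has codimension one in $\bigwedge^k\C^d$ when $\a+\bn\neq 0$), while if $\a+\bn=0$ the whole fibre lies in $W$. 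So as a vector space $F^\a(V)/W$ is spanned by the classes of $\bt^\bn$ for $\a+\bn\neq 0$, and a direct computation of the induced action should identify $F^\a(V)/W$ with (a submodule of) the rank-one module $\C[t^{\pm1}]$ on which $D(\bu,\br)$ acts by $(\bu\,|\,\bn+\a)$, i.e. the $k=d$ case already handled. That module is irreducible when $\a\notin\Z^d$, which gives (1) modulo showing $W$ itself has no proper nonzero submodule; and when $\a\in\Z^d$ it splits as $\C\bt^{-\a}\oplus(\text{rest})$, matching the extra freedom $V'\otimes \bt^{-\a}$ in (2).

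Next I would show that $W$ is irreducible as an $\LL_d$-module when $\a\notin\Z^d$, and more generally describe its submodules. The natural approach is to invoke Theorem~\ref{element_d}-style reasoning restricted to $W$: take a nonzero submodule $N\subseteq W$, pick a nonzero element $\sum_{\br\in I}(w_\br\wedge(\a+\br))\otimes\bt^\br$, and run the same Vandermonde/two-step argument used in the proof of Theorem~\ref{element_d} to conclude that some $(w\wedge(\a+\bn))\otimes\bt^\bn\in N$ for all $\bn$, then use an analogue of Proposition~\ref{d:irre} to conclude $N=W$. The point where Lemma~\ref{orthg} enters is precisely here: to move between different degrees $\bn$ inside $W$ one must keep track of the constraint that the $V$-component always ``contains the vector $\a+\bn$'', and Lemma~\ref{orthg} is exactly the tool that lets one adjust a divergence-zero vector field $\bu$ (with $(\bu|\bn)=0$) to a compatible one $\bu'$ so that the relevant wedge expressions stay inside $W$ under the action. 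I would use it to show the $\sl_d$-type span $\{w : (w\wedge(\a+\bn))\otimes\bt^\bn\in N\ \forall \bn\}$ is forced to be all of $\C^d$ (or to be $\sl_d$-stable in an appropriate sense), giving irreducibility of $W$.

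To finish (2), in the case $\a\in\Z^d$ I would translate so that $\a=0$; then $\bt^{0}=1$ and the fibre at $\bn=0$ is the whole of $V$ (since $\a+\bn=0$ there, the defining condition on $W$ is vacuous and in fact $W\cap(V\otimes\bt^0)=\{v_1\wedge\cdots\wedge v_{k-1}\wedge 0\}=0$; careful, at $\bn=0$ one has $W\cap (V\otimes 1)=0$). The submodule $W$ still accounts for all degrees $\bn\neq 0$, and at degree $0$ any submodule can contain an arbitrary subspace $V'\subseteq V$ because the action sends $V\otimes 1$ into $W$ (the factor $(\bu\,|\,\bn+\a)=(\bu\,|\,\bn)$ combined with the wedge terms always lands in the $W$-part for $\bn\neq 0$, and there is no way to produce anything in $V\otimes 1$ from $W$ since acting on $W$ with $\a+\bn=0$-degree pieces... ), so $W\oplus(V'\otimes 1)$ is a submodule for every subspace $V'$, and conversely any submodule is squeezed between $W$ and $W\oplus(V\otimes 1)$ by the irreducibility analysis of $W$ and of the quotient. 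Assembling: a general submodule $N$ has image in $F^\a(V)/W$ either $0$ or everything-except-possibly-the-$\bt^{-\a}$-line, and intersection with $W$ either $0$ or $W$; chasing the four cases and using that $N\supseteq \LL_d\cdot N$ must contain $W$ as soon as $N\not\subseteq V'\otimes\bt^{-\a}$ yields the stated list.

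The main obstacle I expect is the middle step: proving $W$ itself is irreducible (resp. has no unexpected submodules). The Vandermonde machinery from Theorem~\ref{element_d} does not transfer verbatim, because the ambient module there is $V\otimes A_d$ with $V$ a \emph{generic} simple $\sl_d$-module, whereas here $V=\bigwedge^k\C^d$ is one of the excluded small modules and $W$ is a proper submodule with a degree-dependent description. One has to redo the ``cancel terms to shrink the index set $I$'' argument while respecting membership in $W$, and this is exactly where Lemma~\ref{orthg} is indispensable and where the bookkeeping is most delicate; in particular the element-producing argument must be adapted so that, after each application of $d_{\br,i}$, the resulting vectors still lie in $W$, which constrains which coefficients of which monomials $n_k n_{i+1}\cdots$ one is allowed to extract.
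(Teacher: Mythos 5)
There is a genuine gap, and it occurs at the very first step of your plan. You claim that for $\a+\bn\neq 0$ the fibre of $W$ at degree $\bn$, namely $(\a+\bn)\wedge\bigwedge^{k-1}\C^d$, has codimension one in $V=\bigwedge^{k}\C^d$, so that $F^\a(V)/W$ is a rank-one module identifiable with the Laurent-polynomial module of the case $k=d$. This is false except when $k=d-1$: the fibre of $W$ has dimension $\binom{d-1}{k-1}$, hence codimension $\binom{d-1}{k}$ in $\bigwedge^k\C^d$ (e.g.\ for $k=1$ the fibre $\C(\a+\bn)$ has codimension $d-1$). The correct picture is the de Rham--type one: $F^\a(V(\o_k))/W$ is (essentially) the analogous submodule $W$ sitting inside $F^\a(V(\o_{k+1}))$, so your reduction of assertion (1) to ``the $k=d$ case already handled'' collapses, and any repair along these lines would need a descending induction on $k$ together with a proof of irreducibility of each such $W$ --- which is precisely the step you yourself flag as the main obstacle and do not carry out. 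Moreover, even granting irreducibility of both $W$ and the quotient, uniqueness of the proper submodule in (1) also requires ruling out a complement to $W$ (a submodule meeting $W$ trivially and mapping isomorphically onto the quotient); your ``four cases'' chase asserts that any such $N$ must contain $W$, but gives no argument, and this non-splitting is not automatic.

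For comparison, the paper does not decompose the problem into ``$W$'' plus ``quotient'' at all: it takes an arbitrary nonzero submodule $N$ and proves directly, through Claims 1--4, that $W\subseteq N$. Claim 1 separates the graded components of an element of $N$ via a second-order Vandermonde argument in which Lemma~\ref{orthg} is used to build the pair of divergence-zero operators $D(\bu_1-x\bu_2,\bm_1-x\bm_2)D(x\bu_2,x\bm_2)$; Claims 2--3 then show that from a single $v\ot\bt^{\br}\in N$ one can manufacture elements $(\bn_1\wedge\cdots\wedge\bn_{k-1}\wedge(\a+\br'))\ot\bt^{\br'}\in N$, the alternative being that $N$ contains some $v\ot\bt^{\bm}$ for all $\bm$ and hence equals $F^\a(V)$ by Proposition~\ref{d:irre}; Claim 4 propagates these elements to all degrees, giving $W\subseteq N$. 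Once this is in hand, irreducibility of $W$, uniqueness in (1), and the description $W'=W\oplus(V'\ot t^{-\a})$ in (2) all follow immediately, with no separate analysis of the quotient and no splitting question to address. Your proposal would need to be substantially reorganized (and its quotient computation corrected) before it could be completed.
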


\begin{proof} Let $N$ be a nonzero proper submodule of $F^\a(V)$.
Choose a nonzero vector $\sum_{\br\in I}v_{\br}\ot \bt^{\br}\in
N$, where all $v_\br\in V$ are nonzero and $I\subseteq \Z^d$ is a finite subset. 

Take any $\bm_1,\bm_2\in\Z^d$ and $\bu_2\in\C^d$ with $\bm_2\neq 0, \bu_2\neq0$ 
and $(\bm_2|\bu_2)=0$.
By Lemma \ref{orthg}, we can choose $\bu_1\in\C^d$ such that $(\bm_1|\bu_1)=0$ and 
$(\bm_1-x\bm_2|\bu_1-x\bu_2)=0$ for any $x\in\C$.
Then, for $x\in\Z$, we have
\begin{equation}\label{d:m-n,n}\begin{split}
 &  D(\bu_1-x\bu_2,\bm_1-x\bm_2)D(x\bu_2,x\bm_2)(\sum_{\br\in I}v_{\br}\otimes \bt^{\br})\\
= & x\sum_{\br\in I}\big((\bu_1-x\bu_2 | \a+\br+x\bm_2)+(\bm_1-x\bm_2)(\bu_1^T-x\bu_2^T)\big)\\
  &\hskip3cm \cdot \big((\bu_2|\a+\br)+x\bm_2 \bu_2^T\big)v_{\br}\otimes \bt^{\bm_1+\br}.
\end{split}\end{equation}
By the property of the Vandermonde's determinant, we see that the
coefficients of monomials in $x$ are all in $N$. In particular, the coefficient of $x^2$ gives
\begin{equation}\label{d:poly}\begin{split}
 & \sum_{\br\in I}\Big(\big((\bu_1 |\a+\br)+\bm_1\bu_1^T\big)\bm_2 \bu_2^T\\
  &\hskip30pt +(\bu_2|\a+\br)\big((\bu_1|\bm_2)-(\bu_2 |\a+\br)-\bm_1\bu_2^T-\bm_2\bu_1^T\big)\Big) v_{\br}\otimes \bt^{\bm_1+\br}\in N.
\end{split}\end{equation}
Note that the above formula holds trivially for $\bu_2=0$.

\noindent\textbf{Claim 1.} $v_\br\ot \bt^\br\in N$ for all $\br\in
I$.

Take $\bm_1=0$ and $\bu_1=0$ in \eqref{d:poly}, then we get
\begin{equation}\label{d:m=0}
\sum_{\br\in I}(\bu_2|\a+\br)^2 v_{\br}\otimes \bt^{\br}\in N.
\end{equation}
Let $\bm_2$ vary and we see that \eqref{d:m=0}
holds for all $\bu_2\in\Z^d$. On the other hand, we also have
\begin{equation}\label{d:m}
D(\bu, \bm)\sum_{\br\in I}v_{\br}\otimes \bt^{\br}=\sum_{\br\in
I}\big((\bu|\a+\br)+\bm\bu^T\big)v_{\br}\otimes \bt^{\bm+\br}\in N,\
\forall\ (\bm|\bu)=0.
\end{equation}
Replacing $\sum_{\br\in I}v_{\br}\otimes \bt^{\br}$ with
$D(\bu,\bm)\sum_{\br\in I}v_{\br}\otimes \bt^{\br}$ in \eqref{d:m=0},
we have
\begin{equation}\label{r+m}
 \sum_{\br\in I}(\bu_2|\a+\br+\bm)^2 \big((\bu|\a+\br)+\bm\bu^T\big)v_{\br}\otimes \bt^{\bm+\br}\in N,\ \forall\ \bu_2\in\Z^d.
\end{equation}

It is easy to see that
$(\bu_2|\a+\br_1+\bm)^2=(\bu_2|\a+\br_2+\bm)^2$ as polynomials in $\bu_2$ for $\br_1\neq\br_2$ if and only if
$\a+\br_1+\bm=\pm(\a+\br_2+\bm)$, or equivalently, $2\bm=-\br_2-\br_1-2\a$. Since there are finitely many
$\br\in I$, we see that
for all but finitely many $\bm\in\Z^d$,
$(\bu_2|\a+\br+\bm)^2$ are distinct polynomials in $\bu_2$ for
distinct $\br\in I$. Thus we deduce from \eqref{r+m}, for all but finitely many
$\bm\in\Z^d$, that
\begin{equation*}
 \big((\bu|\a+\br)+\bm\bu^T\big)v_{\br}\otimes \bt^{\bm+\br}\in N,\ \forall\ \br\in I, \bu\in\C^d\ \text{with}\ (\bm|\bu)=0.
\end{equation*}
Applying $D(\bu,-\bm)$ to this element 
we obtain, for
all but finitely many $\bm\in\Z^d$, that
\begin{equation*}
 (\bu|\a+\br)^2 v_{\br}\otimes \bt^{\br}\in N,\ \forall\ \bu\in\C^d\ \text{with}\ (\bm|\bu)=0,
\end{equation*}
forcing $v_{\br}\otimes \bt^{\br}\in N$ for all $\br\in I$ with $\a+\br\neq0$. 
Moreover, if $-\a\in I$, we also have $v_{-\a}\ot \bt^{-\a}\in N$. Claim 1
follows.

So we may assume that $|I|=1$ and $v_\br=v$ in the what follows, more precisely,
we have $v\ot \bt^{\br}\in N$ for some $\br\in\Z^d$.
Replacing
$v\ot \bt^{\br}$ with $D(\bu,\bm)(v\ot \bt^{\br})$ for suitable $\bm\in\Z^d, \bu\in\C^d$ with
$(\bm|\bu)=0$ if necessary, we may assume that $\a+\br\neq0$ since $V$ is a nontrivial $\sl_d$-module.
Now we can get a basis
$\{\a+\br,\bn_1,\bn_2,\cdots,\bn_{d-1}\}$ of $\C^d$ with
$\bn_1,\bn_2,\cdots,\bn_{d-1}\in\Z^d$. 

Identify $V$ with $\bigwedge^k\C^d$, where $\C^d$ is regarded as a $\sl_d$-module via matrix multiplication.

\noindent\textbf{Claim 2.}
$\big(\bn_1\wedge\cdots\wedge\bn_{k-1}\wedge v_k\big)\ot \bt^{\br'}\in
N\setminus\{0\}$ for some $v_{k}\in\C^d$ and $\br'-\br$ is a linear
combination of $n_1,\cdots,n_{k-1}$ with coefficients $1$ or $0$;

The claim is true for $k=d$ or $k=1$. So we assume $2\leq k\leq
d-1$, which implies $d\geq 3$ in particular. Then we prove the
following assertion by induction
\begin{equation}\label{d: claim 2}
 \bn_1\wedge\cdots\wedge\bn_{l-1}\wedge v_l\ot \bt^{\br_l}\in N\setminus\{0\}
\end{equation} for some
$v_l\in\bigwedge^{k-l+1}\C^d$, $0\leq l\leq k$ such that $\br_l-\br$
is a linear combination of $\bn_1,\cdots,\bn_{l-1}$ with
coefficients $1$ or $0$. By Claim 1, we have $v\ot \bt^{\br}\in N$ for
some nonzero $v\in V$. Hence \eqref{d: claim 2} is true for $l=0$
with $v_0=v$ and $\br_0=\br$.

Now suppose \eqref{d: claim 2} holds for $l\leq k-1$. If
$\bn_l\wedge v_l=0$, then we have $v_l=\bn_l\wedge v_{l+1}$ for some
$v_{l+1}\in\bigwedge^{k-l}\C^d$, and
$\bn_1\wedge\cdots\wedge\bn_{l}\wedge v_{l+1}\ot \bt^{\br_{l+1}}\in
N\setminus\{0\}$, where $\br_{l+1}=\br_l$.

Then consider the case $\bn_l\wedge v_l\neq0$. Noticing that $l\leq
k-1\leq d-2$, we can choose a basis $\{e_i, i=1,\cdots,d\}$ of
$\C^d$ such that $e_d=\a+\br_l, e_i=\bn_i$ for $1\leq i\leq l$ and
$(e_i|e_j)=0$ for $i\neq j$ and $l+1\leq i\leq d-1$. Since
$v_l\in\bigwedge^{k-l+1}\C^d$ and $k-l+1\geq 2$, there exists
$l+1\leq i\leq d-1$ such that
$$v_l=e_i\wedge u_1+\bn_l\wedge u_2+\bn_l\wedge e_i\wedge u_3+u_4,$$
with $u_1, u_2\in\bigwedge^{k-l}(\bigoplus_{j=l+1, j\neq i}^{d}\C
e_j), u_3\in\bigwedge^{k-l-1}(\bigoplus_{j=l+1, j\neq i}^{d}\C e_j),
u_4\in\bigwedge^{k-l+1}(\bigoplus_{j=l+1, j\neq i}^{d}\C e_j)$ and
$u_1\neq 0$. Then applying $D(e_{i},\bn_l)$ on
$\bn_1\wedge\cdots\wedge\bn_{l-1}\wedge v_l\ot \bt^{\br_l}$, we get
\begin{equation*}\begin{split}
 D(e_{i},\bn_l)(\bn_1\wedge\cdots\wedge\bn_{l-1}\wedge v_l\ot \bt^{\br_l})
 = & ((\a+\br_l | e_{i})+(\bn_l e_{i}^T))\bn_1\wedge\cdots\wedge\bn_{l-1}\wedge v_l\ot \bt^{\br_l+\bn_l}\\
 = & (e_{i}^Te_i)(\bn_1\wedge\cdots\wedge\bn_{l-1}\wedge \bn_l\wedge u_1)\ot \bt^{\br_l+\bn_l}\\
 = & \bn_1\wedge\cdots\wedge\bn_{l-1}\wedge \bn_l\wedge v_{l+1}\ot \bt^{\br_{l+1}}\in
 N\setminus\{0\},
\end{split}\end{equation*}
where $v_{l+1}=u_1$ and $\br_{l+1}=\br_l+\bn_l$. Claim 2 follows.

\noindent\textbf{Claim 3.} The vector $v_{k}$ in Claim 2 lies in 
$\C\bn_1\oplus\cdots\oplus\C\bn_{k-1}\oplus\C(\a+\br')$;

Always note that $\{\a+\br', \bn_1,\cdots,\bn_{d-1}\}$ is a basis of $\C^d$. 
The claim is trivial for $k=d$, so we assume $k\le	d-1$. Without loss of generality, we assume that $v_k\in\C(\a+\br')\oplus\C\bn_{k}\oplus\cdots\C\bn_{d-1}$ 
and we need only to show $v_k\in\C(\a+\br')$. Suppose on the contrary $v_{k}\not\in\C(\a+\br')$, 
then we will deduce a contradiction.
Fix any $\bm\in\Z^d$. 

If
$\bm\in\C\bn_1\oplus\cdots\C\bn_{k-1}\oplus\C v_k$, then take any
$\bu\in\C^d$ such that $(\bu|v_k)=(\bu|\bn_i)=0$ for all
$i=1,\cdots,k-1$ and $(\bu|\a+\br')\neq0$. We have
$$D(\bu,\bm)\big((\bn_1\wedge\cdots\wedge \bn_{k-1}\wedge v_k)\ot
\bt^{\br'}\big)=(\bu|\a+\br')(\bn_1\wedge\cdots\wedge \bn_{k-1}\wedge
v_k)\ot \bt^{\br'+\bm}\in N\setminus\{0\},$$
forcing $(\bn_1\wedge\cdots\wedge \bn_{k-1}\wedge
v_k)\ot \bt^{\br'+\bm}\in N$.

If $\bm\not\in\C\bn_1\oplus\cdots\C\bn_{k-1}\oplus\C v_k$ and $k\geq
2$, we take $\bu\in\C^d$ such that $(\bu|v_k)=(\bu|\bn_i)=0$ for all $i=1,\cdots,k-1$ 
and $(\bu|\a+\br')\neq0$. Then by \eqref{d:poly},
with $\bm_1$ replaced by $\bm$, $\bm_2$ by $\bn_1$, $\bu_2$ by $\bu$, 
$\sum_{\br\in I}v_\br\ot\bt^{\br}$ by 
$(\bn_1\wedge\cdots\bn_{k-1}\wedge v_k)\ot \bt^{\br'}$ and $\bu_1$ suitably choosed, we have
\begin{equation*}\begin{split}
  & \Big(\big((\bu_1 |\a+\br')+\bm\bu_1^T\big)\bn_1\bu^T \\
  & \hskip30pt +(\bu|\a +\br')\big((\bu_1 |\bn_1)-(\bu|\a+\br')-\bm\bu^T-\bn_1\bu_1^T\big)\Big)
     (\bn_1\wedge\cdots\wedge \bn_{k-1}\wedge v_k)\ot \bt^{\br'+\bm}\\
 =&(\bu|\a+\br')\big((\bu_1 |\bn_1)-(\bu|\a+\br')-\bn_1\bu_1^T\big)\Big) (\bn_1\wedge\cdots\wedge \bn_{k-1}\wedge v_k)\ot \bt^{\br'+\bm}\\
 =&(\bu|\a+\br')^2(\bn_1\wedge\cdots\wedge \bn_{k-1}\wedge v_k)\ot \bt^{\br'+\bm}.
\end{split}\end{equation*}
Consequently, we obtain $(\bn_1\wedge\cdots\wedge \bn_{k-1}\wedge
v_k)\ot \bt^{\br'+\bm}$. 

If $\bm\not\in\C\bn_1\oplus\cdots\C\bn_{k-1}\oplus\C v_k$ and $k=1$.
Then $\bm\notin\C v_1$ and there exists $1\leq i_0\leq d-1$ such that
$v_1, \bm, \bn_i, i=1,\cdots,d-1, i\neq i_0$ form a basis of $\C^d$. 
For simplicity, we may assume that $i_0=d-1$, 
that is, $v_1, \bm, \bn_i, i=1,\cdots,d-2$ form a basis of $\C^d$.

For any $\bn=(n_1,\cdots,n_d)\in\Z^d$, we write $\bn=\sum_{i=1}^{d-2}x_i\bn_i+xv_1+x'\bm$ for some 
$x_i, x,x'\in\C$. 
Write $v_1=(v_{1,1},\cdots,v_{1,d})^T, \bm=(m_1,\cdots,m_d)$ and $\bn_i=(n_{i,1},\cdots,n_{i,d})$ for convenience, without loss of generality, we may assume that $v_{1,1}\neq0$. 
Take nonzero $w\in\C^d$ such that $(v_1|w)=0$ and $(\a+\br'|w)\neq0$, 
then we can write $w=\sum_{i=2}^{d}a_i(v_{1,i}\e_1-v_{1,1}\e_i)$ for some $a_i\in\C$.
Set $\bu=\sum_{i=2}^{d}a_i(n_i\e_1-n_1\e_i)$, $w'=\sum_{i=2}^{d}a_i(m_i\e_1-m_1\e_i)$, 
$w_j=\sum_{i=2}^da_i(n_{j,i}\e_1-n_{j,1}\e_i)$ for $j=1,\cdots,d-2$, and 
we see $(\bn|\bu)=(v_1|w)=(\bm|w')=(\bn_i|w_i)=0$ 
and $\bu=\sum_{i=1}^{d-2}x_iw_i+xw+x'w'$.
Note we have $(\bm-y\bn|w'-y\bu)=0$ for all $y\in\C$.

Take $\bm_1=\bm$, $\bm_2=\bn$, $\bu_2=\bu$ and $\bu_1=w'$
in \eqref{d:poly}, with $\sum_{\br\in I}v_\br\ot\bt^\br$ 
replaced by $v_1\ot\bt^{\br'}$, we have
\begin{equation*}\begin{split}
 & \left(\big((w' |\a+\br')+\bm w'\,^T\big)
    \big(\sum_{i=1}^{d-2}x_i\bn_i+xv_1+x'\bm\big)\big(\sum_{i=1}^{d-2}x_iw_i+xw+x'w'\big)^T\right.\\
  &\hskip1pt +\big(\sum_{i=1}^{d-2}x_iw_i+xw+x'w'|\a+\br'\big)
     \Big(\big(w'\big|\sum_{i=1}^{d-2}x_i\bn_i+xv_1+x'\bm\big)
            -\big(\sum_{i=1}^{d-2}x_iw_i+xw+x'w'\big|\a+\br'\big)\\
  &\hskip20pt -\left.\bm\big(\sum_{i=1}^{d-2}x_iw_i+xw+x'w'\big)^T-
    \big(\sum_{i=1}^{d-2}x_i\bn_i+xv_1+x'\bm\big) w'\,^T\Big)\right) v_1\otimes \bt^{\bm+\br'}\in N.
\end{split}\end{equation*}

Regard the element in the above formula as a polynomial in $x_1,\cdots,x_{d-2}, x'$ and the formula holds if we replace $(x_1,\cdots,x_{d-2},x')$ with any elements in $(x_1,\cdots,x_{d-2},x')+\Z^{d-1}$. 
Therefore, the coefficient of each monomial in $x_1,\cdots,x_{d-2},x'$ in the formula lies in $N$, and in particular, the term of degree $0$ with respect to $x_1,\cdots,x_{d-2},x'$ lies in $N$, that is,
\begin{equation*}\begin{split}
 & x^2\Big(\big((w' |\a+\br')+\bm w'\,^T\big)v_1w^T \\
   & \hskip20pt + \big(w|\a+\br'\big)\big((w'|v_1)-(w|\a+\br')
  -\bm w^T-v_1w'\,^T\big)\Big) v_1\otimes \bt^{\bm+\br'}\in N.
\end{split}\end{equation*}
Noticing that $(w|v_1)=0$ and we may choose $x\neq0$, the above formula is equivalent to
\begin{equation*}\begin{split}
(w|\a+\br')^2v_{1}\otimes \bt^{\bm+\br'}\in N.
\end{split}\end{equation*} 
We get $v_{1}\otimes \bt^{\bm+\br'}\in N$ in this case.

Now in all cases, we can deduce that 
$(\bn_1\wedge\cdots\wedge\bn_{k-1}\wedge v_{k})\otimes \bt^{\bm+\br'}\in N$ for all $\bm\in\Z^d$. 
Then Theorem \ref{d:irre} implies
$N=F^\a(V)$, contradiction. The claim follows.

\noindent\textbf{Remark.} By the proof of Claim 2 and Claim 3, we can also deduce the following 
two results:
\begin{itemize}
\item[(i)] If there is some nonzero $v\ot \bt^\br\in N$, then for any $\bn_1,\cdots,\bn_{k-1}$ 
such that $\a+\br,\bn_1,\cdots,\bn_{k-1}$ are linearly independent, 
we can deduce $(\bn_1\wedge\cdots\wedge\bn_{k-1}\wedge (\a+\br'))\ot \bt^{\br'}\in N$ for some 
$\br'\in\Z^d$ with $\br'-\br\in\C\bn_1\oplus\cdots\oplus\C\bn_{k-1}$; 
\item[(ii)] If $(\bn_1\wedge\cdots\wedge\bn_{k-1}\wedge v_k)\ot \bt^{\br'}\in N$ and $v_k\notin\C\bn_1\oplus\cdots\oplus\C\bn_{k-1}\oplus\C(\a+\br')$, then $(\bn_1\wedge\cdots\wedge\bn_{k-1}\wedge v_k)\ot \bt^{\bm}\in N$ for all $\bm\in\Z^d$.
\end{itemize}

\noindent\textbf{Claim 4.} $(\bn_1\wedge\cdots\wedge
\bn_{k-1}\wedge(\a+\bs))\ot \bt^\bs\in N$ for all $\bs\in \Z^d$.

By Claim 3, we have $(\bn_1\wedge\cdots\wedge
\bn_{k-1}\wedge(\a+\br'))\ot \bt^{\br'}\in N$. 
Fix any $\bm\in\Z^d$. 

First assume $k\leq d-1$. If $\bm\notin\C(\a+\br')$, we choose $\bu\in\C^d$ such that 
$(\bm|\bu)=(\bn_i|\bu)=0$ for all $i=1,\cdots,k-1$ and $(\bu|\a+\br')\neq0$, applying $D(\bu,\bm)$
we have
$$\aligned
& D(\bu,\bm)(\bn_1\wedge\cdots\wedge \bn_{k-1}\wedge(\a+\br'))\ot \bt^{\br'}\\
= & \big((\bu|\a+\br')+(\bm\bu^T)\big)(\bn_1\wedge\cdots\wedge \bn_{k-1}\wedge(\a+\br'))\ot \bt^{\br'+\bm}\\
= & (\bu|\a+\br')(\bn_1\wedge\cdots\wedge \bn_{k-1}\wedge(\a+\br'+\bm))\ot \bt^{\br'+\bm}\in N,
\endaligned$$
which implies $(\bn_1\wedge\cdots\wedge \bn_{k-1}\wedge(\a+\br'+\bm))\ot \bt^{\br'+\bm}\in N$.
If $\bm\in\C(\a+\br')$ but $\bm\neq -(\a+\br')$, choose $\bm'\notin\C(\a+\br')$ and $\bm-\bm'\notin\C(\a+\br'+\bm')$, then by the previous argument, 
we have $(\bn_1\wedge\cdots\wedge \bn_{k-1}\wedge(\a+\br'+\bm'))\ot \bt^{\br'+\bm'}\in N$ and 
$(\bn_1\wedge\cdots\wedge \bn_{k-1}\wedge(\a+\br'+\bm))\ot \bt^{\br'+\bm}\in N$.
If $\bm=-(\a+\br')$, the result is trivial since $\a+\br'+\bm=0$.

Now suppose $k=d$. Then $V$ is a trivial $\sl_d$-module and a similar discussion as above shows that
$(\bn_1\wedge\cdots\wedge \bn_{k-1}\wedge(\a+\br'+\bm))\ot \bt^{\br'+\bm}\in N$ for all $\bm\in\Z^d.$ 
Claim 4 is true.

By Claim 4 and the remark before it, we see that 
$$W=\sum_{\br\in\Z^d}\Big(\C(\a+\br)\wedge\bigwedge^{k-1}\C^{d}\Big)\ot\bt^{\br}\subseteq N.$$

Suppose $N\neq W$ and take any nonzero $\sum_{\br\in J}v_\br\ot\bt^{\br}\in N\setminus W$, 
where $J$ is a finite index set. By Claim 1, we see that $v_\br\ot\bt^{\br}\in N$ for all $\br\in J$.
So we may assume that $v_\br\ot\bt^\br\in N\setminus W$ and $v_\br=v_1\wedge\cdots\wedge v_{k}$.
If $\a+\br\neq0$, we must have that 
$v_1, \cdots, v_{k}, \a+\br$ is linearly independent and, by the remark following Claim 3,
we can deduce $(v_1\wedge\cdots\wedge v_{k})\ot\bt^{\br+\bm}\in N$ for all $\bm\in\Z^d$, forcing $N=F^\a(V)$, contradiction. Thus we have proved that $W$ is the only nonzero proper $\LL_d$-submodule of $F^\a(V)$ if $\a\notin\Z^d$. Assertion (1) is proved.

If $\a\in\Z^d$, the previous argument indicates that 
$v\ot\bt^{-\a}\in N\setminus W$ for some $v\in V$. Denote
$V'=\{v\in V\ |\ v\ot\bt^{-\a}\in N\}$, then we see $W'=W\oplus(V'\otimes t^{-\a})$.
Assertion (2) follows and the theorem is completed.
\end{proof}

\section{Modules for the $q$-analogue Algebras}

In this section we consider the similar problems for the
$q$-analogues of the algebras $\LL$ and $\hLL$. We first recall the
definitions for the corresponding algebras.

Let $q=(q_{ij})_{i,j=1}^d$ be a $d\times d$ matrix over $\C$, 
where $q_{ij}=q_{ji}^{-1}$ are roots of unity. 
We have the $d$-dimensional quantum torus $\C_q=\C_q[t_1^{\pm1},\cdots, t_d^{\pm1}]$,
which is the associative non-commutative algebra generated by
$t_1^{\pm1},\cdots,t_2^{\pm1}$ subject to the defining relations
$t_it_j=q_{ij}t_jt_i$ for $i\neq j$ and $t_it_i^{-1}=1$. As before, we
write $\bt^{\bn}=t_1^{n_1}\cdots t_d^{n_d}$ for any $\bn=(n_1,\cdots,n_d)\in\Z^d$. It is
easy to check that
$$\bt^{\bm}\bt^\bn=\sigma(\bm,\bn)\bt^{\bm+\bn},\ \ \bt^{\bm}\bt^\bn=f(\bm,\bn)\bt^{\bn}\bt^{\bm},\ \forall\ \bm,\bn\in\Z^d,$$
where $\sigma(\bm,\bn)=\prod_{1\leq i<j\leq d}q_{ji}^{m_jn_i}$ and 
$f(\bm,\bn)=\prod_{1\leq i,j\leq d}q_{ji}^{m_jn_i}$.
It is not hard to verify that
$f(\bm,\bn)=\sigma(\bm,\bn)\sigma^{-1}(\bn,\bm)$ and $f(\bm+\bn,\br)=f(\bm,\br)f(\bn,\br)$.

Denote
$$\Rad_q=\Big\{\bn\in\Z^d\ \Big|\ f(\bn,\bm)=0,\ \forall\ \bm\in\Z^d\Big\}.$$
It is well known from \cite{BGK} that the derivation Lie algebra of
$\C_q$ is
$$\Der(\C_q)=\bigoplus_{\bn\in\Z^d}\Der_n(\C_q),\ \text{where}\
\Der_n(\C_q)=\left\{\begin{array}{ll}
 \ad \bt^n, & \text{if}\ \bn\not\in\Rad_q,\\\\
 \bigoplus_{i=1}^d\C t^\bn\p_i, & \text{if}\ \bn\in\Rad_q,
\end{array}\right.$$
where $\ad \bt^\bn$ is the inner derivation with respect to
$\bt^\bn$ and $\p_i$ is the degree derivation with respect to the
variable $t_i$, that is, $\p_i(\bt^\bn)=n_i\bt^\bn$, for $i=1,\cdots,d$.
As before, we also denote $D(\bu,\bn)=\bt^{\bn}\sum_{i=1}^du_i\p_i$ for $\bu\in\C^d$ 
and $\bn\in\Rad_q$. The Lie bracket of $\Der(\C_q)$ can be given explicitly as 
\begin{equation*}\begin{split}
&[\ad \bt^{\bm},\ad \bt^\bn]=\ad [\bt^\bm,\bt^\bn],\\ 
&[D(\bu,\br), \ad \bt^\bn]=(\bu|\bs)\ad \bt^{\br}\bt^{\bs},\\
&[D(\bu,\br), D(\bv,\bs)]=\sigma(\br,\bs)\big((\br|\bv)D(\bu,\br+\bs)-(\bs|\bu)(\bv,\br+\bs)\big),
\end{split}\end{equation*}
for all $\bm, \bn\notin\Rad_q, \br, \bs\in\Rad_q$ and $\bu,\bv\in\C^d$.

Similarly, for any $\alpha=(\a_1,\cdots,\a_d)\in\C^d$ and $\mathfrak{gl}_d$-module $V$,
the tensor space $F_q^{\a}(V)=\C_q\ot V$ admits an $\Der(\C_q)$-module
structure defined by
\begin{equation}\label{mod_LLq}\begin{array}{l}
    (\ad \bt^\bm)(\bt^{\bn}\otimes v)=[\bt^{\bm},\bt^{\bn}]\otimes v,\\\\
    D(\bu,\br)(\bt^{\bn}\otimes v)=\sigma(\br,\bn)\bt^{\br+\bn}\otimes \left((\bu|\bn+\a)+(\br\bu^T)\right)v,\\
\end{array}\end{equation}
fro any $\bm\notin\Rad_q, \br\in\Rad_q, \bn\in\Z^d, \bu\in\C^d$ and $v\in V$.

The Lie algebra $\Der(\C_q)$ has a natural subalgebra
$$\LL_d(q)=\bigoplus_{\bn\in\Z^2}\LL_d(q)_n,\ \text{where}\
\LL_d(q)_n(\C_q)=\left\{\begin{array}{ll}
 \ad \bt^n&\text{if}\ \bn\not\in\Rad_q,\\\\
 \sum_{i,j=1}^d\C t^\bn(n_j\p_i-n_i\p_j)&\text{if}\ \bn\in\Rad_q.
\end{array}\right.$$
Adding the degree operators, we get another subalgebra
$\hLL_d(q)=\LL_d(q)\oplus\sum_{i=1}^d\C\p_i$, which is called the skew
derivation Lie algebra of $\C_q$. We see that $\hLL_d(q)$ is spanned by all
$\ad \bt^\bn$ for $\bn\not\in\Rad_q$ and $D(\bu,\bn)$ for $\bu\in\C^d, \bn\in\Rad_q$ with $(\bu|\bn)=0$.
%
Now we can regard $F_q^\a(V)$ as an $\LL_d(q)$-module as well as an $\hLL_d(q)$-module.

In what follows, we will denote $\LL(q)=\LL_d(q), \hLL(q)=\hLL_d(q)$ and $\LL=\LL_d, \hLL=\hLL_d$ for simplicity.
%

Remark that when $q_{i,j}=1$ for all $i,j=1,\cdots,d$, we have
$\C_q[t_1^{\pm1},\cdots,t_d^{\pm1}]=\C[t_1^{\pm1},\cdots,t_d^{\pm1}]$ is just the
usual Laurent polynomial algebra and $\LL(q)$ and $\hLL(q)$ are just the algebras
$\LL$ and $\hLL$ we studied in the previous sections. 

Note that $[\C_q,\C_q]=\sum_{\bn\notin\Rad_q} \C\bt^{\bn}$. We see
that $\LL'(q)=\sum_{\bn\in\Rad_q}  \sum_{i,j=1}^d\C t^\bn(n_j\p_i-n_i\p_j)$ and 
$\hLL'(q)=\LL'(q)\oplus\sum_{i=1}^d\C\p_i$ are subalgebras of
$\LL(q)$ and $\hLL(q)$, respectively. Further more, $\ad
[\C_q,\C_q]=\sum_{\bm\notin\Rad_q}\C\ad \bt^{\bm}$ is an ideal of
both $\LL(q)$ and $\hLL(q)$. We have
\begin{equation}\label{iso_LL}
\LL(q)/\ad [\C_q,\C_q]\cong\LL'(q)\cong \LL\quad\text{and}\quad \hLL(q)/\ad
[\C_q,\C_q]\cong\hLL'(q)\cong \hLL.
\end{equation} 

By Theorem III.2 oin \cite{N} (or cf. \cite{LZ2} Lemma 3.3), by replacing $t_1,\cdots, t_d$ with another set of suitable generators $\bt^{\bn_1},\cdots,\bt^{\bn_d}$, 
we can assume that $q_{2i-1,2i}=q_{2i,2i-1}^{-1}$ is a primitive root of unity of order $l_{2i-1}=l_{2i}\geq 2$ 
for any $i=1,\cdots,d_0$ with $2d_0\leq d$ and all other $q_{ij}=1$. 
For convenience, denote $l_i=1$ for $2d_0+1\leq i\leq d$ and $\bl=(l_1,\cdots,l_d)$. 
Fix these notations from now on. Under this assumption, we have $\Rad_q=\bigoplus_{i=1}^dl_i\Z$.
Then the Lie algebra isomorphisms
in \eqref{iso_LL} can be given by 
\begin{equation}\label{iso'_LL}
D(\bu,\bn)\mapsto D\Big(\sum_{i=1}^dl_iu_i, \sum_{i=1}^d\frac{n_i}{l_i}\Big),
\end{equation}
for all $\bu=(u_1,\cdots,u_d)\in\C^d$ and $\bn=(n_1,\cdots,n_d)\in\Rad_q$ with $(\bu|\bn)=0$.

%


We first consider $F_q^\a(V)$ as a module over the subalgebras
$\LL'(q)\cong\LL$ or $\hLL'(q)\cong\hLL$. Denote $\GG=\LL\ \text{or}\
\hLL$, $\GG'(q)=\LL'(q)\ \text{or}\ \hLL(q)'$ for convenience. 
Set 
$$I=\{\bi=(i_1,\cdots,i_d)\in\Z^d\ |\ 0\leq i_j< l_j\},$$ 
then we have the $\GG'(q)$-module decomposition of $F_q^\a(V)$ as follows
$$F_q^\a(V)=\sum_{\bi\in I}F_q^{\a, \bi}(V),\ 
\text{where}\ F_q^{\a,\bi}(V)=\sum_{\br\in\Rad_q}\C \bt^{\br+\bi}\ot V.$$ 

For the $\gl_d$-module $V$, we define a new action of $\gl_d$ on $V$ as follows:
for any $B\in\gl_d$ and $v\in V$, we set $B\circ v=LBL^{-1}v$, 
where $L=(\delta_{ij}l_i)_{i,j=1}^d$ is the an invertible matrix.
Denote this new $\gl_d$-module by $V^{(\bl)}$.
Using the Lie algebra isomorphisms \eqref{iso_LL} given by \eqref{iso'_LL}, we can view
each $F_q^{\a,\bi}(V)$ as a $\GG$-module and moreover, we have the $\GG$-module isomorphism
\begin{equation}\label{iso_mod}
F_q^{\a,\bi}(V) \longrightarrow F^{\a_{\bi}}(V^{(\bl)}), \bt^{\bn+\bi}\ot v \mapsto t_1^{n_1/l_1}\cdots t_d^{n_d/l_d}\ot v,\ \forall\ \bn=(n_1,\cdots,n_d)\in \Rad_q,
\end{equation} 
where $\bi=(i_1,\cdots,i_d)\in I$, $\a_{\bi}=(\frac{\a_1+i_1}{l_1},\cdots,\frac{\a_d+i_d}{l_d})$ and $F^{\a_{\bi}}(V^{(\bl)})$ is the $\GG$-module studied in Section 3.

Noticing that $(\ad [\C_q,\C_q])F_q^{\a,\bf{0}}=0$ by \eqref{mod_LLq}, we see that the
$\LL(q)$ or $\hLL(q)$-module structure of $F_q^{\a,\bf{0}}(V)$ is completely
determined by the $\GG'(q)\cong\GG$-module structure of $F^{\a_{\bf{0}}}(V)$, which is completely determined
by results in Section 3. Now, as in \cite{LZ2}, we denote
$$G_q^\a(V)=\sum_{\bi\in I, \bi\neq \bf{0}}F_q^{\a,\bi}(V).$$ 
Then we have
%
%

\begin{theorem}
$G_q^\a(V)$ is an irreducible $\LL(q)$-module and hence an irreducible
$\hLL(q)$-module.
\end{theorem}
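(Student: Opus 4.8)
The plan is to reduce everything to the $\GG'(q)\cong\GG$-module structure of the summands, which is completely described by Section 3, and to use the inner derivations $\ad\bt^\bm$ ($\bm\notin\Rad_q$) as the glue between the summands. Recall that $G_q^\a(V)=\bigoplus_{\bi\in I,\,\bi\neq{\bf 0}}F_q^{\a,\bi}(V)=[\C_q,\C_q]\ot V$, that each $F_q^{\a,\bi}(V)$ is, as a $\GG'(q)\cong\GG$-module, isomorphic by \eqref{iso_mod} to one of the modules $F^{\a_\bi}(V^{(\bl)})$ studied in Section 3 (with $V^{(\bl)}\cong V(\o_k)$ as $\sl_d$-modules iff $V\cong V(\o_k)$), and that, since $f(\bn,\bn)=\sigma(\bn,\bn)\sigma^{-1}(\bn,\bn)=1$, one has $[\bt^\bm,\bt^\bn]=0$ whenever $\bm,\bn\notin\Rad_q$ while $\bm+\bn\in\Rad_q$; hence $G_q^\a(V)$ really is an $\LL(q)$-submodule of $F_q^\a(V)$, graded by the finite group $\Z^d/\Rad_q$ with $\GG'(q)$ in degree ${\bf 0}$ and $\ad\bt^\bm$ raising the degree by $\bm$. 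For $\bj\in\Z^d$ set $\bj^\perp=\{\bk\in\Z^d : f(\bk,\bj)=1\}\supseteq\Rad_q$.

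\emph{Step 1 (reduction to one summand).} Let $W$ be a nonzero $\LL(q)$-submodule. For $\bm\notin\Rad_q$ the element $T_\bm=(\ad\bt^{-\bm})(\ad\bt^\bm)\in U(\LL(q))$ acts, by \eqref{mod_LLq}, on $F_q^{\a,\bi}(V)$ as the scalar $\sigma(\bm,\bm)^{-1}(f(\bm,\bi)-1)(f(\bm,\bi)^{-1}-1)$, which is nonzero precisely when $f(\bm,\bi)\neq1$. Because $\Rad_q$ is the radical of $f$ and a finite abelian group is never a union of two proper subgroups, for distinct $\bi,\bj\in I$ there is $\bm$ with $f(\bm,\bi)\neq f(\bm,\bj)^{\pm1}$, unless $\bi\equiv-\bj\pmod{\Rad_q}$; an antipodal pair $\{\bi,-\bi\}$ is separated after first applying some $\ad\bt^\bm$. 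Eliminating the components of a nonzero $w=\sum_\bi w_\bi\in W$ by Vandermonde-type polynomials in the $T_\bm$, we obtain a nonzero element of $W$ lying in a single summand $F_q^{\a,\bi_0}(V)$ with $\bi_0\neq{\bf 0}$.

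\emph{Step 2 (producing a full summand).} Put $N=\GG'(q)\cdot w_{\bi_0}\subseteq F_q^{\a,\bi_0}(V)$. If $V\not\cong V(\o_k)$ for every $k$, then $F^{\a_{\bi_0}}(V^{(\bl)})$ is irreducible by Theorem \ref{d:infinite}, so $F_q^{\a,\bi_0}(V)\subseteq W$ and we set $\bj_1=\bi_0$. If $V\cong V(\o_d)$, the same works when $\bi_0$ is not the special index ($\a+\bi_0\notin\Rad_q$), and otherwise we first move by some $\ad\bt^\bm$ with $f(\bm,\bi_0)\neq1$ into the non-special, hence irreducible, summand $F_q^{\a,\bi_0+\bm}(V)$, taking $\bj_1=\bi_0+\bm$. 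The crucial case is $V\cong V(\o_k)$ with $1\leq k\leq d-1$: by Theorem \ref{d:omega_k}, $N$ contains the submodule $W_{\bi_0}=\sum_{\bn\equiv\bi_0}\bigl(\bigwedge^{k-1}\C^d\wedge(\a+\bn)\bigr)\ot\bt^\bn$; choosing $\bm$ with $f(\bm,\bi_0)\neq1$ and $\bn\equiv\bi_0$ with $\a+\bn\neq0$ and $\bm\notin\C(\a+\bn)$ (possible since $\Rad_q$ is infinite), the map $\ad\bt^\bm$ sends the degree-$\bn$ part of $W_{\bi_0}$ onto $\bigl(\bigwedge^{k-1}\C^d\wedge(\a+\bn)\bigr)\ot\bt^{\bn+\bm}\subseteq W\cap F_q^{\a,\bi_0+\bm}(V)$. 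Since $k+1\leq d$, this subspace contains an element $(w_1\wedge\cdots\wedge w_{k-1}\wedge(\a+\bn))\ot\bt^{\bn+\bm}$ with $w_1,\dots,w_{k-1},\a+\bn,\a+\bn+\bm$ linearly independent, so by the Remark after Claim 3 in the proof of Theorem \ref{d:omega_k} (transported through the isomorphism \eqref{iso_mod}) we conclude $W\cap F_q^{\a,\bi_0+\bm}(V)=F_q^{\a,\bi_0+\bm}(V)$; put $\bj_1=\bi_0+\bm$.

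\emph{Step 3 (spreading and conclusion).} In all cases $F_q^{\a,\bj_1}(V)\subseteq W$ with $\bj_1\neq{\bf 0}$. Whenever $f(\bj,\bj_1)\neq1$, the operator $\ad\bt^{\bj-\bj_1}$ (any integral representative) maps $F_q^{\a,\bj_1}(V)$ bijectively onto $F_q^{\a,\bj}(V)$, so $F_q^{\a,\bj}(V)\subseteq W$ for all $\bj\notin\bj_1^\perp$. Given $\bj\in\bj_1^\perp$ with $\bj\neq{\bf 0}$, the proper subgroups $\bj^\perp,\bj_1^\perp$ of $\Z^d/\Rad_q$ cannot cover it, so there is $\bk\notin\bj^\perp\cup\bj_1^\perp$; then $F_q^{\a,\bk}(V)\subseteq W$ by the previous sentence, and $f(\bj,\bk)\neq1$ gives $F_q^{\a,\bj}(V)\subseteq W$ as well. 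Hence $W=G_q^\a(V)$, which proves irreducibility over $\LL(q)$, and a fortiori over $\hLL(q)\supseteq\LL(q)$. The main obstacle is the case $V\cong V(\o_k)$, $1\leq k\leq d-1$, in Step 2: one must see precisely why the proper submodules $W_\bi$ of the individual pieces $F_q^{\a,\bi}(V)$ — the very submodules that make $F^\a(V)$ reducible over the untwisted algebras — fail to assemble into an $\LL(q)$-submodule of $G_q^\a(V)$, and here the fine ``sticking out'' criterion of Section 3 is indispensable; a subsidiary difficulty, forced by $\LL(q)$ having no Cartan subalgebra, is Step 1, where the diagonal operators $T_\bm$ must play the role of the missing Cartan and the coincidence of their eigenvalues on an antipodal pair $\{\bi,-\bi\}$ has to be handled separately.
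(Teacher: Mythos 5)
Your proof is correct in its essentials, but it takes a genuinely different route from the paper's, in both halves of the argument. To isolate a nonzero element of $W$ in a single component $F_q^{\a,\bi_0}(V)$, the paper performs an explicit support reduction, applying operators $\ad \bt^{\br'}$, $\ad\bt^{\bn}$ and $\ad\bt^{\bn+\br'+\bj'}$ whose nonvanishing is guaranteed by a prime-power divisibility argument; you instead diagonalize with the operators $T_{\bm}=(\ad\bt^{-\bm})(\ad\bt^{\bm})$, whose scalar on the class $\bi$ you compute correctly, and separate classes by characters, treating the antipodal degeneracy $\{\bi,-\bi\}$ with an auxiliary $\ad\bt^{\bm}$. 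More significantly, the globalization differs: the paper extracts from the component only a pure tensor $\bt^{\br+\bi}\ot v\in N$ (all it needs from Section 3 is that every nonzero submodule of $F^{\a_{\bi}}(V^{(\bl)})$ contains one), then invokes the $\Z^d$-graded simplicity of $[\C_q,\C_q]$ (Lemma 2.2 of \cite{LZ2}) to get $[\C_q,\C_q]\ot v\subseteq N$, and finishes by showing $V'=\{v'\mid [\C_q,\C_q]\ot v'\subseteq N\}$ is $\sl_d$-stable; you instead force an entire component into $W$ — in the critical case $V\cong V(\o_k)$, $1\le k\le d-1$, via the full classification in Theorem \ref{d:omega_k} together with the ``sticking out'' Remark (ii) — and then spread components around with $\ad\bt^{\bm}$ and the fact that a group is never the union of two proper subgroups. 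Your version avoids citing the graded simplicity of $[\C_q,\C_q]$, at the price of using the finer structure of Theorem \ref{d:omega_k}; the paper's version needs much less from Section 3 and treats all $V$ uniformly. Three details you should spell out: (i) in the antipodal step you need $\bm$ with $f(\bm,\bi_0)\neq1$ and $2\bm\notin\Rad_q$, which exists by the same two-subgroup argument, and when every $l_i\le 2$ each class is its own antipode so the case is vacuous; (ii) the Remark after Claim 3 is stated for integer vectors $\bn_1,\dots,\bn_{k-1}$, so choose your $w_1,\dots,w_{k-1}$ in $\Z^d$; (iii) justify that, transported through \eqref{iso_mod} and the twist by the diagonal matrix $L$, the distinguished submodule of $F_q^{\a,\bi}(V)$ really is $\sum_{\bn\equiv\bi}\big(\bigwedge^{k-1}\C^d\wedge(\a+\bn)\big)\ot\bt^{\bn}$ and the ``sticking out'' criterion is preserved — it is, because the conjugation by $L$ and the rescaling of exponents cancel, but this verification is what licenses your use of Theorem \ref{d:omega_k} in the untwisted coordinates.
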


\begin{proof} Take any nonzero $\LL(q)$-submodule $N$ of $G^\a_q(V)$, then
$N_{\bi}=N\cap F_q^{\a,\bi}(V)$ is an $\LL'(q)$-submodule of
$F_q^{\a,\bi}(V)$ for any $\bi\in I\setminus\{0\}$.

\noindent\textbf{Claim 1.} $N_{\bi}\neq0$ for some $\bi\in
I\setminus\{0\}$.

\def\F{\mathbb{F}}

Take any nonzero $w=\sum_{\bi\in J}\bt^{\br+\bi}\otimes v_{\bi}\in
N$ for some $\br=(r_1,\cdots,r_d)\notin \Rad_q$, some finite subset
$J\subseteq\Z^d$ with $0\in J$, and nonzero $v_\bi\in V, \bi\in J$.
If $J\subseteq\Rad_q$, the claim already holds.

Now suppose $J\not\subseteq\Rad_q$ and take any $\bj=(j_1,\cdots,j_d)\in
J\setminus\Rad_q$. Recall $\Rad_q=\bigoplus_{i=1}^dl_i\Z$, we may assume that not both of
$j_1$ and $j_2$ are divisible by $l_1=l_2$ without loss of generality. 
There exist a prime $p$ and an integer $k\in\Z_+$
such that
$$(j_1,j_2)\in (p^k\Z\times p^k\Z)\setminus(p^{k+1}\Z\times p^{k+1}\Z),\ \ l_1=l_2\in p^{k+1}\Z.$$
It is obvious $(r_1,r_2)\notin(p^{k+1}\Z)^2$ or $(r_1+j_1,r_2+j_2)\notin (p^{k+1}\Z)^2$, 
and without loss of generality, we may assume that
$(r_1,r_2)\notin (p^{k+1}\Z)^2$. So there exists $\bn=(n_1,n_2,0,\cdots,0)\in\Z^d$ such that
neither $r_1n_2-r_2n_1$ nor $j_1n_2-j_2n_1$ is divisible by $p^{k+1}$. Hence neither $r_1n_2-r_2n_1$ nor $j_1n_2-j_2n_1$ is divisible by $l_1=l_2$.
%

Set $\br'=(r_1,r_2,0,\cdots,0)$ and $\bj'=(j_1,j_2,0,\cdots,0)\in\Z^d$. 
If $q_{21}^{j_1r_2}-q_{21}^{j_2r_1}\neq0$, we have
$$0\neq w'=(\ad \bt^{\br'})w=\sum_{\bi\in J}[\bt^{\br'}, \bt^{\br+\bi}]\otimes v_{\bi}=\sum_{\bi\in J\setminus\{0\}}q_{21}^{r_1r_2}\big(q_{21}^{i_1r_2}-q_{21}^{i_2r_1}\big)\bt^{\br+\bi+\br'}\otimes v_{\bi}\in N.$$
Now suppose $q_{21}^{j_1r_2}-q_{21}^{j_2r_1}=0$, or equivalently, $j_1r_2-j_2r_1$
is divisible by $l_1=l_2$. First applying $\ad \bt^{\bn}$ on $w$ we get
$$0\neq (\ad \bt^{\bn})w=\sum_{\bi\in J}[\bt^{\bn}, \bt^{\br+\bi}]\otimes v_{\bi}=\sum_{\bi\in J}\bt^{\bn+\br+\bi}\otimes v'_{\bi}\in N,$$
where
$v'_{\bi}=\big(q_{21}^{(r_1+i_1)n_2}-q_{21}^{(r_2+i_2)n_1}\big)v_{\bi}\in V$
with $v'_0\neq0$ since $r_2n_1-r_1n_2$ is not divisible by
$l_1$. Next we apply $\ad \bt^{\bn+\br'+\bj'}$ and obtain
$$w'=(\ad \bt^{\bn+\br'+\bj'})(\ad \bt^{\bn})w=(\ad \bt^{\bn+\br'+\bj'})\sum_{\bi\in
J}\bt^{\bn+\br+\bi}\otimes v'_{\bi}=\sum_{\bi\in
J\setminus\{\bj\}}[\bt^{\bn+\br'+\bj'}, \bt^{\bn+\br+\bi}]\otimes
v'_{\bi}\in N.$$ 
Since $n_1j_2-n_2j_1$ is not divisible by $l_1=l_2$, we have
$$[\bt^{\bn+\br'+\bj'}, \bt^{\bn+\br}]=q_{21}^{(n_1+r_1)(n_2+r_2)}(q_{21}^{(n_1+r_1)j_2}-q_{21}^{(n_2+r_2)j_1})=q_{21}^{(n_1+r_1)(n_2+r_2)}q_{21}^{j_1r_2}(q_{21}^{n_1j_2}-q_{21}^{n_2j_1})\neq0$$
and hence $0\neq w'\in N$.

Replacing $w$ with $w'$, we have made the index set $J$ smaller.
Repeat this process several times, we can reach a nonzero element in
$N_{\bi}$ for some $\bi\in I\setminus\{0\}$ and the claim follows.

%

By the above claim, we can take $\bi\in I\setminus\{0\}$ such that
$N_{\bi}\neq 0$. Since $\LL'(q)\cong \LL$, we have that $N_{\bi}$ is
isomorphism to one of the $\LL$-submodules of $F^{\a_{\bi}}(V^{(\bl)})$ (cf. \eqref{iso_mod}) as
described in Section 3. In particular, there exist
$\br\in\Rad_q$ and nonzero $v\in V$ such that $\bt^{\br+\bi}\ot
v\in N_{\bi}$ by Theorem \ref{d:infinite} and Theorem \ref{d:omega_k}. 
Fix this $v$, and by the module action
\eqref{mod_LLq}, we see that
$$K=\{\bt^{\bn}\ |\ \bn\not\in\Rad_q, \bt^\bn\ot v\in N\}$$
is a nonzero $\Z^d$-graded ideal of the Lie algebra $[\C_q,\C_q]$.
By Lemma 2.2 of \cite{LZ2}, $[\C_q,\C_q]$ is a $\Z^d$-graded simple
Lie algebra, so we have $K=[\C_q,\C_q]$ and hence $[\C_q,\C_q]\ot
v\subseteq N$. Now let
$$V'=\{v'\in V\ |\ [\C_q,\C_q]\ot v'\in N\}.$$
Again by the module action \eqref{mod_LLq} we see that $V'$ is
stable under the action of $\br^T\bu$ for all $\br\in\Rad_q, \bu\in\C^d$ with $(\bu|\br)=0$, hence under
the action of $\sl_d$. We obtain that $V'$ is a nonzero submodule of
$\sl_d$ and hence $V'=V$. The theorem is completed.
%
\end{proof}


\begin{equation}\label{mod_LLq}\begin{array}{l}
    (\ad \bt^\bm)(\bt^{\bn}\otimes v)=[\bt^{\bm},\bt^{\bn}]\otimes v,\\\\
    D(\bu,\br)(\bt^{\bn}\otimes v)=\sigma(\br,\bn)\bt^{\br+\bn}\otimes \left((\bu|\bn+\a)+(\br\bu^T)\right)v,\\
\end{array}\end{equation}
fro any $\bm\notin\Rad_q, \br\in\Rad_q, \bn\in\Z^d, \bu\in\C^d$ and $v\in V$.

Summarize the results in this section, we can conclude that

\begin{theorem}\label{main_q} Let $V$ be an irreducible
$\mathfrak{gl}_d$-module (maybe infinite-dimensional) and
$\alpha=(\a_1,\cdots,\a_d)\in\C^d$. Denote $\GG=\LL\ \text{or}\ \hLL$,
$\GG(q)=\LL(q)\ \text{or}\ \hLL(q)$. The $\GG(q)$-module $F^\a_q(V)$
can be decomposed as the direct sum of two submodules
$$F^\a_q(V)=F_q^{\a,\bf{0}}(V)\oplus G^\a_q(V).$$ Moreover,
$\big(\ad[\C_q,\C_q]\big)F^{\a,\bf{0}}(V)=0$ and as a
$\GG(q)/\ad[\C_q,\C_q]\cong\GG$-module $F^{\a,\bf{0}}(V)$ is
isomorphic to the $\GG$-module $F^{\a_{\bf{0}}}(V^{(\bl)})$ 
via \eqref{iso_LL} and \eqref{iso_mod}, whose module structure is completely determined in Section 3; 
and $G^\a_q(V)$ is an irreducible $\GG(q)$-module.
\end{theorem}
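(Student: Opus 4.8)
The plan is to assemble Theorem \ref{main_q} from the constructions and results already in place, the only genuine (and short) point being the verification that the two summands are honest $\GG(q)$-submodules. Recall that $\GG(q)=\ad[\C_q,\C_q]\oplus\GG'(q)$ as vector spaces, that $\GG'(q)$ preserves each graded piece $F_q^{\a,\bi}(V)$ (this is how the $\GG'(q)$-decomposition of $F_q^\a(V)$ was obtained), and that by \eqref{mod_LLq} the operator $\ad\bt^\bm$ with $\bm\notin\Rad_q$ sends $F_q^{\a,\bi}(V)$ into $F_q^{\a,\bi'}(V)$ with $\bi'\equiv\bi+\bm\pmod{\Rad_q}$. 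For $F_q^{\a,\bf{0}}(V)$: every $\br\in\Rad_q$ satisfies $f(\bm,\br)=1$, hence $[\bt^\bm,\bt^\br]=0$, so $\ad[\C_q,\C_q]$ annihilates $F_q^{\a,\bf{0}}(V)$; combined with $\GG'(q)$-invariance this makes $F_q^{\a,\bf{0}}(V)$ a $\GG(q)$-submodule on which $\ad[\C_q,\C_q]$ acts trivially, which already gives the identity $\big(\ad[\C_q,\C_q]\big)F_q^{\a,\bf{0}}(V)=0$.

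For $G_q^\a(V)=\sum_{\bi\neq\bf{0}}F_q^{\a,\bi}(V)$, the only way $\ad\bt^\bm$ could push a piece with $\bi\neq\bf{0}$ into $F_q^{\a,\bf{0}}(V)$ is if $\bm\equiv-\bi\pmod{\Rad_q}$; but then, on $\bt^{\br+\bi}\ot v$ with $\br\in\Rad_q$, the relevant structure constant $\sigma(\bm,\br+\bi)-\sigma(\br+\bi,\bm)$ vanishes because $f(\bm,\br+\bi)=f(\bm,\br)f(\bm,\bi)=f(-\bi,\bi)=f(\bi,\bi)^{-1}=1$ (using bimultiplicativity of $f$, $\br\in\Rad_q$, and $f(\bi,\bi)=\sigma(\bi,\bi)\sigma(\bi,\bi)^{-1}=1$). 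Hence $\ad\bt^\bm$ kills that piece, so $G_q^\a(V)$ is a $\GG(q)$-submodule, and $F_q^\a(V)=F_q^{\a,\bf{0}}(V)\oplus G_q^\a(V)$ as $\GG(q)$-modules. This establishes the first assertion.

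Next, I would describe the structure of $F_q^{\a,\bf{0}}(V)$. Since $\ad[\C_q,\C_q]$ acts as $0$ there, the $\GG(q)$-action factors through $\GG(q)/\ad[\C_q,\C_q]$, which by \eqref{iso_LL} together with the explicit isomorphism \eqref{iso'_LL} is identified with $\GG$; under \eqref{iso_mod} with $\bi=\bf{0}$ this $\GG$-module is exactly $F^{\a_{\bf{0}}}(V^{(\bl)})$. Conjugation by the invertible diagonal matrix $L$ is an automorphism of $\sl_d$ fixing the Cartan and the standard Borel, so $V^{(\bl)}$ is again an irreducible $\gl_d$-module and is of the form $V(\omega_k)$ as an $\sl_d$-module precisely when $V$ is; therefore the complete submodule description of $F^{\a_{\bf{0}}}(V^{(\bl)})$ is read off verbatim from Theorem \ref{d:infinite} and Theorem \ref{d:omega_k} (with the one-dimensional case $\omega_d$ handled by the remark preceding Lemma \ref{orthg}), together with Corollary \ref{hatLL} in the $\hLL$ case.

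Finally, the irreducibility of $G_q^\a(V)$ as a $\GG(q)$-module is exactly the content of the previous theorem, where it was shown that $G_q^\a(V)$ is an irreducible $\LL(q)$-module and hence an irreducible $\hLL(q)$-module. Collecting the three parts yields Theorem \ref{main_q}. There is no serious obstacle, since this is a synthesis of earlier statements; the one step demanding a short argument of its own is the closedness of $G_q^\a(V)$ under $\ad\bt^\bm$, i.e.\ the vanishing $f(\bi,\bi)=1$ of the relevant structure constant, and everything else is a matter of quoting \eqref{iso_LL}, \eqref{iso_mod}, the results of Section 3, and the preceding theorem.
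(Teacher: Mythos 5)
Your proposal is correct and follows essentially the same route as the paper, which states Theorem \ref{main_q} as a summary assembled from the decomposition $F_q^\a(V)=F_q^{\a,\bf{0}}(V)\oplus G_q^\a(V)$, the observation that $\ad[\C_q,\C_q]$ kills $F_q^{\a,\bf{0}}(V)$, the identifications \eqref{iso_LL}, \eqref{iso'_LL}, \eqref{iso_mod} together with Section 3, and the preceding irreducibility theorem for $G_q^\a(V)$. Your only addition is the explicit check (via $f(\bm,\br+\bi)=f(\bm,\bi)=f(-\bi,\bi)=1$) that $G_q^\a(V)$ is closed under $\ad\bt^\bm$, a verification the paper leaves implicit, and it is carried out correctly.
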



\noindent {\bf Acknowledgments.} This work is partially supported by
the NSF of China (Grant 11471294) and the Foundation for Young
Teachers of Zhengzhou University (Grant 1421315071). 


\begin{thebibliography}{99999}

\bibitem[AABGP]{AABGP} B.N. Allison, S. Azam, S. Berman, Y. Gao and A. Pianzola, 
Extended affine Lie algebras and their root systems, {\it Mem. Amer. Math. Soc.}, 126 (605) (1997).

\bibitem[B]{B}Y. Billing, Jet modules, {\it Canad. J. Math.}, 59(2007), 712--729.

\bibitem[BGK]{BGK} S. Berman, Y. Gao and Y. Krylyuk, Quantum tori and the structure of elliptic quasi-simple Lie algebras. {\it J. Funct. Anal.}, 135(1996), no. 2, 339--389.  

\bibitem[BF]{BF} Y. Billing and V. Futorny, Classification of irreducible representations of Lie algebra of vector fields on a torus. {\it J. Reine Angew. Math.}, 720 (2016), 199--216.

\bibitem[BMZ]{BMZ} Y. Billing, A. Molev and R. Zhang, Differential equations in vertex algebras and simple modules for the Lie algebra of vector fields on a torus. {\it Adv. Math.}, 218 (2008), 1972--2004.

\bibitem[BT]{BT} Y. Billig and J. Talboom, Classification of Category $\mathcal{J}$ Modules for Divergence Zero Vector Fields on a Torus. Preprint, arXiv:1607.07067.

\bibitem[DZ]{DZ} D. Djokovic and K. Zhao, Some infinite-dass of Lie algebras of Block timensional simple Lie algebras in characteristic 0 related to those of Block, {\it J. Pure Appl. Algebra}, 127(1998), no.2, 153--165.

\bibitem[E1]{E1} S. Eswara Rao, Irreducible representations of the Lie algebra of the diffeomorphisms of a $d$-dimensional torus, {\it J. Algebea}, 182 (1996), no 2, 401--421.

\bibitem[E2]{E2} S. Eswara Rao, Partial classification of modules for Lie-algebra of diffeomorphisms of 
$d$-dimensional torus, {\it J. Math. Phys.}, 45(8), (2004) 3322--3333.



\bibitem[GLZ]{GLZ} X. Guo, G. Liu and K. Zhao, Irreducible Harish-Chandra modules over extended Witt algebras, {\it Ark. Mat.}, 52 (2014), 99--112.

\bibitem[GZ]{GZ} X. Guo and K. Zhao, Irreducible weight modules over Witt algebras, {\it Proc. Amer. Math. Soc.}, 139(2011), 2367--2373.


\bibitem[L1]{L1} T. A. Larsson, Multi dimensional Virasoro algebra, {\it Phys. Lett.}, B 231, 94--96(1989).

\bibitem[L2]{L2} T. A. Larsson, Central and non-central extrension of multi-graded Lie algebras, {\it J. Phys., A}, 25, 6493--6508(1992).

\bibitem[L3]{L3} T. A. Larsson, Conformal fields: A class of representation of Vect(N), 
{\it Int. J. Mod. Phys. A}, 7, 6493--6508(1992).

\bibitem[LT1]{LT1} W. Lin and S. Tan, Representations of the Lie algebra of derivations for quantum torus, 
{\it J. Algebra}, 275(2004), 250--274. 

\bibitem[LT2]{LT2} W. Lin and S. Tan, The  representation of the skew derivation Lie algebras over the quantum torus, {\it Adv. Math. (China)}, 34(4) (2005)477--487.

\bibitem[LZ1]{LZ} G. Liu and K. Zhao, New irreducible weight modules over Witt algebras with infinite-dimensional weight spaces. {\it Bull. Lond. Math. Soc.}, 47 (2015), no. 5, 789--795.

\bibitem[LZ2]{LZ1.5} G. Liu and K. Zhao, Irreducible Harish-Chandra modules over the derivation algebras of rational quantum tori. {\it Glasg. Math. J.}, 55 (2013), no. 3, 677--693.

\bibitem[LZ3]{LZ2} G. Liu and K. Zhao, Irreducible modules over the derivation algebras of rational quantum tori. {\it J. Algebra}, 340 (2011), 28--34.


\bibitem[MZ]{MZ} V. Marzuchuk, K. Zhao, Supports of weight modules over Witt algebras, 
{\it Proc. Roy. Soc. Edinburgh Sect. A}, 141(2011), 155--170.

\bibitem[N]{N} K. Neeb, the classification of rational quantum tori and the structure of their automorphism groups. {\it Canad. Math. Bull.}, 51 (2008), no. 2, 261--282.

\bibitem[Sh]{Sh} G. Shen, Graded modules of graded Lie algebra of Cartan type. I. Mixed products of modules,  {\it Sci. Sinica Ser.}, A 29(1986), no.6, 570--581.

\bibitem[T]{T} J. Talboom, Irreducible modules for the Lie algebra of divergence zero vector fields on a torus. (English summary). {\it Comm. Algebra}, 44 (2016), no. 4, 1795--1808. 

\bibitem[Z]{Z} K. Zhao, Weight modules over generalized Witt algebras with 1-dimensional weight spaces, 
{\it Forum Math.}, 16(2004), no. 5, 725--748.


\end{thebibliography}
\end{document}